\newcommand{\R}[0]{\mathbb{R}}
\newcommand{\C}[0]{\mathbb{C}}
\newcommand{\N}[0]{\mathbb{N}}
\newcommand{\Z}[0]{\mathbb{Z}}
\newcommand{\calH}[0]{\mathcal{H}}
\newcommand{\calP}[0]{\mathcal{P}}
\newcommand{\tn}[1]{\text{#1}}
\DeclareMathOperator{\spn}{span}
\DeclareMathOperator{\Sym}{Sym}
\DeclareMathOperator{\Skew}{Skew}
\DeclareMathOperator{\divides}{div}
\newcommand{\lrfl}[1]{\left\lfloor#1\right\rfloor}
\newcommand{\lrp}[1]{\left ( {#1} \right )}
\newtheorem{theorem}{Theorem}[section]
\theoremstyle{definition}
\newtheorem{corollary}[theorem]{Corollary}
\newtheorem{lemma}[theorem]{Lemma}
\newtheorem{proposition}[theorem]{Proposition}
\newtheorem{definition}[theorem]{Definition}
\newtheorem{remark}[theorem]{Remark}
\newtheorem*{proposition*}{Proposition}
\title{Spectral Analysis of the Kohn Laplacian on Lens Spaces}
\author{Colin Fan} 
\address[Colin Fan]{Rutgers University--New Brunswick, Department of Mathematics, Piscataway, NJ 08854, USA}
\email{colin.fan@rutgers.edu}
\author{Elena Kim}
\address[Elena Kim]{Massachusetts Institute of Technology, Department of Mathematics, Cambridge, MA 02142, USA}
\email{elenakim@mit.edu}
\author{Zoe Plzak} 
\address[Zoe Plzak]{Department of Mathematics, Boston University, Boston, MA 02215, USA}
\email{zplzak@bu.edu}
\author{Ian Shors} 
\address[Ian Shors]{Department of Mathematics, Harvey Mudd College, Claremont, CA 91711, USA}
\email{ishors@g.hmc.edu }
\author{Samuel Sottile} 
\address[Samuel Sottile]{Department of Mathematics, Michigan State University, East Lansing, MI 48910, USA}
\email{sottile1@msu.edu}
\author{Yunus E. Zeytuncu}
\address[Yunus E. Zeytuncu]{University of Michigan--Dearborn, Department of Mathematics and Statistics, 
Dearborn, MI 48128, USA}
\email{zeytuncu@umich.edu}
\thanks{This work is supported by NSF (DMS-1950102 and DMS-1659203). The work of the second author is suppported by a grant by the NSF Graduate Research Fellowship   (\#1745302) and the work of the last author is also partially supported by a grant from the Simons Foundation (\#353525).}
\subjclass[2010]{Primary 32W10; Secondary 32V20}
\keywords{Kohn Laplacian, Weyl's Law, Lens Spaces}
\begin{document}
\begin{abstract}
We obtain an analog of Weyl's law for the Kohn Laplacian on lens spaces.
We also show that two 3-dimensional lens spaces with fundamental groups of equal prime order are isospectral with respect to the Kohn Laplacian if and only if they are CR isometric.
\end{abstract}

\maketitle


\setcounter{section}{-1}

\section{Introduction}
One of the most celebrated results in spectral geometry is due to Hermann Weyl in 1911: given any bounded domain $\Omega$ in $\mathbb{R}^n$, one can obtain its volume by understanding the eigenvalue distribution of the Laplacian on $\Omega$. This result, usually referred to as Weyl's law, has since been extended to the Laplace-Beltrami operator and other elliptic operators on Riemannian manifolds \cite{Stanton}.

However, as the Kohn Laplacian $\Box_b$ is not elliptic, it is an open problem to find an analog of Weyl's law for the Kohn Laplacian on functions for CR manifolds.
Stanton and Tartokoff presented in \cite{Stanton1984TheHE} a version 
on $q$-forms that are not functions or of top-degree. Motivated by this problem, in \cite{REU2020Weyl}, the authors prove an analog of Weyl's law for the Kohn Laplacian on functions on odd-dimensional spheres. Moreover, they conjecture that their result generalizes to compact strongly pseudoconvex embedded CR manifolds of hypersurface type.

In this paper, we first generalize the aforementioned result to lens spaces. In particular, we show that one can hear the order of its fundamental group. Moreover, the universal constant obtained in the asymptotic behavior of the eigenvalue counting function for the Kohn Laplacian matches the universal constant computed on spheres, providing further evidence for the conjecture above. 

The second portion of this paper is inspired by the 1979 result of Ikeda and Yamamoto \cite{ikeda1979} which shows that if two 3-dimensional lens spaces with fundamental group of order $k$ are isospectral with respect to the standard Laplacian, then they are isometric as Riemannian manifolds. That is, one can hear the shape of a lens space from its standard Laplacian.
We investigate the analogous question: does the spectrum of the Kohn Laplacian determine when two lens spaces are CR isometric? We say that two CR manifolds are CR isospectral if the spectrums of the Kohn Laplacian on each manifold are identical.  We suspect that, as in the Riemannian setting, one can hear the shape of a 3-dimensional lens space as a CR manifold from its Kohn Laplacian. As a partial result towards this conjecture, we show that two isospectral lens spaces must have fundamental groups of equal order. Moreover, we show that if the order is prime, and if the dimension is $3$, being CR isospectral is equivalent to being CR isometric.

The organization of this paper is as follows. We first prove the analog of Weyl's law for the Kohn Laplacian on lens spaces. The proof follows from a careful counting of solutions to a system of diophantine equations and asymptotic analysis of binomial coefficients. Next, we prove the result on isospectral lens spaces by using a chain of equivalencies and extending some techniques of Ikeda and Yamamoto.
\section{Preliminaries}

Let $S^{2n-1}$ denote the unit sphere in $\mathbb{C}^n$. We begin by formally defining lens spaces. 

\begin{definition}\label{def:lens_space}
Let $k$ be a positive integer and let $\ell_1,\ldots, \ell_n$ be integers relatively prime to $k$. Let $\zeta = e^{2\pi i/k}$. Define the map $g: S^{2n-1}\to S^{2n-1}$ by 
\[g\left(z_1,\ldots, z_n\right) = \left(\zeta^{\ell_1} z_1,\ldots,\zeta^{\ell_n} z_n\right). \]
Let $G$ be the cyclic subgroup of the unitary group $U\left(n\right)$ generated by $g$. The \emph{lens space} $L\left(k;\ell_1,\ldots,\ell_n\right)$ is the quotient space, $G\setminus S^{2n-1}$.
\end{definition}

We now consider some preliminary results that are necessary in our analysis of the spectrum of the Kohn Laplacian on lens spaces. First we present Folland's calculation of the spectrum of $\Box_b$ on spheres.

\begin{theorem}[\cite{Folland}]\label{thm:spectrum_box_b} The space of square integrable functions on $S^{2n-1}$ yields the following spectral decomposition for $\square_b$,
\[L^2\left(S^{2n-1}\right) =\bigoplus_{p,q\geq 0} \calH_{p,q}\left(S^{2n-1}\right),\]
where $\calH_{p,q}\left(S^{2n-1}\right)$ denotes the space of harmonic polynomials on the sphere of bidegree $\left(p,q\right)$. Furthermore, $\calH_{p,q}\left(S^{2n-1}\right)$ has corresponding eigenvalue $2q\left(p+n-1\right)$ and
\begin{align*}
\dim \calH_{p,q} \left(S^{2n-1}\right) 
=\left( \frac{p+q}{n-1}+1\right)\binom{p+n-2}{n-2}\binom{q+n-2}{n-2}.
\end{align*}

\end{theorem}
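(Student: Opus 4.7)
The plan is to establish the three claims of the theorem separately, following the classical $U(n)$-equivariant approach.

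For the $L^2$ decomposition, I would first invoke the Stone--Weierstrass theorem to see that polynomials in $z_1,\ldots,z_n,\bar z_1,\ldots,\bar z_n$ are dense in $C(S^{2n-1})$, hence in $L^2(S^{2n-1})$. Grading these by bidegree gives the spaces $\calP_{p,q}$ of polynomials that are $p$-homogeneous in $z$ and $q$-homogeneous in $\bar z$. The next ingredient is the Fischer decomposition $\calP_{p,q} = \calH_{p,q} \oplus |z|^2 \calP_{p-1,q-1}$, whose iteration combined with $|z|^2 \equiv 1$ on the sphere yields $\calP_{p,q}\big|_{S^{2n-1}} = \sum_{k=0}^{\min(p,q)} \calH_{p-k,q-k}\big|_{S^{2n-1}}$. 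Orthogonality of distinct $\calH_{p,q}$ as subspaces of $L^2(S^{2n-1})$ then follows because each $\calH_{p,q}$ is an irreducible $U(n)$-representation of distinct highest weight, and unitarity of the $U(n)$-action together with Schur's lemma forces pairwise orthogonality.

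For the eigenvalue claim, I would express $\Box_b = \bar\partial_b^{\,*} \bar\partial_b$ in terms of a tangential antiholomorphic frame on $S^{2n-1}$. For $f \in \calH_{p,q}$ the Euler fields $\sum z_i\partial_{z_i}$ and $\sum \bar z_i\partial_{\bar z_i}$ act as multiplication by $p$ and $q$ respectively, and the ambient complex Laplacian $\sum_j \partial_{z_j}\partial_{\bar z_j}$ annihilates $f$ by definition of harmonicity. Substituting these relations into the local expression for $\Box_b f$ and using $|z|^2 = 1$ on the sphere collapses the result to a scalar multiple of $f$; tracking the constant carefully gives $2q(p+n-1)$.

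For the dimension formula, the Fischer decomposition together with $\dim \calP_{p,q} = \binom{p+n-1}{n-1}\binom{q+n-1}{n-1}$ gives
\[
\dim \calH_{p,q} = \binom{p+n-1}{n-1}\binom{q+n-1}{n-1} - \binom{p+n-2}{n-1}\binom{q+n-2}{n-1},
\]
and two applications of Pascal's rule together with algebraic simplification recover the stated expression $\left(\frac{p+q}{n-1}+1\right)\binom{p+n-2}{n-2}\binom{q+n-2}{n-2}$. The main obstacle will be the eigenvalue computation: I must ensure the chosen frame is genuinely tangent to $S^{2n-1}$, that the formal adjoint $\bar\partial_b^{\,*}$ is taken with respect to the induced surface measure, and that all cross terms arising from the constraint $|z|^2 = 1$ combine cleanly into the claimed constant. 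The other two parts are essentially bookkeeping given the representation-theoretic framework.
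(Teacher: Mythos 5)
The paper does not reprove this theorem: it is quoted directly from Folland's 1972 paper, so there is no in-house proof to compare against. Your sketch follows the standard route (essentially Folland's), and the two bookkeeping parts are correct and complete in outline. For the decomposition, density of polynomials together with the Fischer splitting $\calP_{p,q} = \calH_{p,q} \oplus |z|^2\calP_{p-1,q-1}$ and the $U(n)$-irreducibility/Schur argument does yield $L^2(S^{2n-1}) = \bigoplus_{p,q} \calH_{p,q}$. For the dimension, $\dim\calP_{p,q} = \binom{p+n-1}{n-1}\binom{q+n-1}{n-1}$ and the Fischer difference give
\[
\dim\calH_{p,q} = \binom{p+n-2}{n-2}\binom{q+n-2}{n-2}\cdot\frac{(p+n-1)(q+n-1) - pq}{(n-1)^2} = \binom{p+n-2}{n-2}\binom{q+n-2}{n-2}\cdot\frac{p+q+n-1}{n-1},
\]
which is the stated formula, so that step is verified.

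The eigenvalue claim is a different matter: you correctly identify the ingredients (a tangential $(0,1)$-frame, the Euler relations $\sum_j z_j\partial_{z_j}f = pf$, $\sum_j \bar z_j\partial_{\bar z_j}f = qf$, and $\sum_j \partial_{z_j}\partial_{\bar z_j}f = 0$ by harmonicity), but the computation itself is not carried out, and you flag this yourself as the main obstacle. This is precisely the nontrivial content of Folland's theorem; as written, the constant $2q(p+n-1)$ is asserted rather than derived, and the cross terms coming from the constraint $|z|^2 = 1$ (and from commuting $\bar\partial_b^{\,*}$ past $\bar\partial_b$) are exactly where a sign or coefficient error would creep in. So the proposal is a correct plan matching the cited source's approach, with a genuine unfinished step at its center rather than a wrong idea.
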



For the remainder of the paper we will write $\mathcal{H}_{p,q}$ for $\calH_{p,q}\left(S^{2n-1}\right)$. Note that Theorem \ref{thm:spectrum_box_b} allows us to quickly compute the spectrum of $\Box_b$ on $S^{2n-1}$; each eigenvalue is a non-negative even integer and the multiplicity of an eigenvalue $\lambda$ is  
\[\sum_{2q(p+n-1) = \lambda} \dim\calH_{p,q}.\] 

It is also important to note that a basis for $\mathcal{H}_{p,q}$ can be computed explicitly. 
\begin{theorem}[\cite{REU18}]
For $\alpha \in \mathbb{N}^n$, define $\left|\alpha\right| = \sum_{j=1}^n\alpha_j$. For $\alpha,\beta \in\mathbb{N}^n$ let
    \[\overline{D}^{\alpha} = \frac{\partial^{\left|\alpha\right|}}{\partial \overline{z}_1^{\alpha_1}\cdots \partial \overline{z}_n^{\alpha_n}} \quad \text{and} \quad \ D^{\beta} = \frac{\partial^{\left|\beta\right|}}{\partial z_1^{\beta_1}\cdots \partial z_n^{\beta_n}}. \]
The following yields a basis for $\mathcal{H}_{p,q} \left(S^{2n-1}\right)$:
    \[
    \left\{\overline{D}^{\alpha}D^{\beta}|z|^{2-2n} :\ |\alpha|=p,\ |\beta|=q,\ \alpha_1=0\textnormal{ or }\beta_1=0\right\}.\]
\end{theorem}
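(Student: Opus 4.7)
My plan is to verify in succession: (a) each $u_{\alpha,\beta}:=\overline{D}^{\alpha}D^{\beta}|z|^{2-2n}$ restricts to an element of $\mathcal{H}_{p,q}$ on $S^{2n-1}$; (b) the unrestricted family $\{u_{\alpha,\beta}:|\alpha|=p,|\beta|=q\}$ spans $\mathcal{H}_{p,q}$; (c) the identity $\Delta|z|^{2-2n}=0$ supplies enough linear relations to prune down to the restricted index set, whose cardinality equals $\dim\mathcal{H}_{p,q}$.

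For (a), $|z|^{2-2n}$ is harmonic on $\mathbb{R}^{2n}\setminus\{0\}$ (for $n\geq 2$), and since the complex Laplacian $\Delta=\sum_j\partial_{z_j}\partial_{\bar z_j}$ commutes with each $\overline{D}^{\alpha}$ and $D^{\beta}$, so is $u_{\alpha,\beta}$. It is homogeneous of degree $2-2n-p-q$, so by the Kelvin transform correspondence (harmonic functions homogeneous of degree $-(k+2n-2)$ match harmonic polynomials of degree $k$) the restriction $u_{\alpha,\beta}|_{S^{2n-1}}$ is a spherical harmonic of total degree $p+q$. To isolate the bidegree I use the diagonal $S^1$-action $z\mapsto e^{i\theta}z$: since $|z|^{2-2n}$ has $S^1$-weight $0$ and $\overline{D}^{\alpha}D^{\beta}$ carries weight $|\alpha|-|\beta|=p-q$, every monomial $z^{\gamma}\bar z^{\delta}$ in the harmonic polynomial representative satisfies $|\gamma|-|\delta|=p-q$ and $|\gamma|+|\delta|=p+q$, forcing $(|\gamma|,|\delta|)=(p,q)$. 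Hence $u_{\alpha,\beta}|_{S^{2n-1}}\in\mathcal{H}_{p,q}$. For (b), $|z|^{2-2n}$ is $U(n)$-invariant, so $V_{p,q}:=\spn\{u_{\alpha,\beta}|_{S^{2n-1}}\}$ is a $U(n)$-submodule of $\mathcal{H}_{p,q}$. A direct Leibniz-rule expansion of $\partial_{\bar z_1}^p\partial_{z_1}^q|z|^{2-2n}$ exhibits a nonvanishing $z_1^p\bar z_1^q$-coefficient, so $u_{pe_1,qe_1}$ is not identically zero on $\mathbb{R}^{2n}\setminus\{0\}$; by homogeneity $u_{pe_1,qe_1}|_{S^{2n-1}}\neq 0$ and hence $V_{p,q}\neq 0$. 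Since $\mathcal{H}_{p,q}$ is irreducible as a $U(n)$-representation (a classical fact from the decomposition of $L^2(S^{2n-1})$), we conclude $V_{p,q}=\mathcal{H}_{p,q}$.

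For (c), applying $\overline{D}^{\alpha-e_1}D^{\beta-e_1}$ to $\sum_{j=1}^n\partial_{z_j}\partial_{\bar z_j}|z|^{2-2n}=0$ yields, whenever $\alpha_1,\beta_1\geq 1$,
\[
u_{\alpha,\beta}=-\sum_{j=2}^n u_{\alpha-e_1+e_j,\,\beta-e_1+e_j}.
\]
The first coordinate of both multi-indices on the right drops by one, so induction on $\min(\alpha_1,\beta_1)$ expresses every $u_{\alpha,\beta}$ as a combination of members of the restricted family, which therefore still spans $\mathcal{H}_{p,q}$. Finally, an inclusion--exclusion count for the restricted index set gives cardinality
\[
\binom{p+n-2}{n-2}\binom{q+n-1}{n-1}+\binom{p+n-1}{n-1}\binom{q+n-2}{n-2}-\binom{p+n-2}{n-2}\binom{q+n-2}{n-2},
\]
which collapses via $\binom{k+n-1}{n-1}=\frac{k+n-1}{n-1}\binom{k+n-2}{n-2}$ to $\left(\frac{p+q}{n-1}+1\right)\binom{p+n-2}{n-2}\binom{q+n-2}{n-2}=\dim\mathcal{H}_{p,q}$ by Theorem~\ref{thm:spectrum_box_b}. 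A spanning set of the correct cardinality is a basis.

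The main obstacle is step (b): establishing that derivatives of the Newton kernel exhaust all of $\mathcal{H}_{p,q}$ rests on the $U(n)$-irreducibility of $\mathcal{H}_{p,q}$, a standard fact worth citing explicitly. An alternative avoiding this representation-theoretic input would be to prove linear independence of the restricted family directly via the monomial expansion of $u_{\alpha,\beta}$ together with a careful leading-term argument, but the required combinatorics are substantially more involved.
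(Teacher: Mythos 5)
The paper does not prove this theorem: it is stated as a citation from \cite{REU18} and used as a black box, so there is no in-paper argument to compare against. Your proposal is a correct, self-contained proof along the natural lines: membership in $\mathcal{H}_{p,q}$ via the Kelvin-transform correspondence together with $S^1$-weight bookkeeping, spanning via $U(n)$-irreducibility of $\mathcal{H}_{p,q}$, pruning via the harmonicity relation $\sum_j \partial_{z_j}\partial_{\bar z_j}|z|^{2-2n}=0$ applied under $\overline{D}^{\alpha-e_1}D^{\beta-e_1}$, and an inclusion--exclusion count that collapses to $\left(\frac{p+q}{n-1}+1\right)\binom{p+n-2}{n-2}\binom{q+n-2}{n-2}$, matching Folland's dimension formula. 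Two small polish points. First, the $U(n)$-irreducibility of $\mathcal{H}_{p,q}$ is the one non-elementary input and should be cited explicitly (e.g.\ Rudin, \emph{Function Theory in the Unit Ball of $\mathbb{C}^n$}, \S 12.2). Second, your nonvanishing witness $u_{pe_1,qe_1}$ works but is awkward because the $\partial_{\bar z_1}^p$ string interacts with the $\bar z_1^q$ factor produced by $\partial_{z_1}^q$, so the ``$z_1^p\bar z_1^q$-coefficient'' claim needs care; for $n\geq 2$ it is cleaner to take $u_{pe_1,qe_2}$, which computes in closed form to a nonzero constant times $z_1^p\bar z_2^q\,|z|^{2-2n-2p-2q}$ and therefore restricts on $S^{2n-1}$ to a nonzero multiple of the monomial $z_1^p\bar z_2^q$.
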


Note that $G$ acts naturally on $L^2\left(S^{2n-1}\right)$ by precomposition. In particular, note that each $\calH_{p,q}$ space is invariant under $G$. It follows that, 

\begin{theorem} 
The space of square integrable functions on $L(k;\ell_1,\dots,\ell_n)$ has the following spectral decomposition for $\square_b$ on $L(k;\ell_1,\dots,\ell_n)$,
\[L^2\left(L\left(k;\ell_1,\dots,\ell_n\right)\right) = \bigoplus_{p,q\geq 0} \calH_{p,q}\left(L\left(k;\ell_1,\dots,\ell_n\right)\right) \cong \bigoplus_{p,q\geq 0} \calH^G_{p,q},\]
where $\calH_{p,q}^G$ is the subspace of $\calH_{p,q}$ consisting of elements invariant under the action of $G$. That is,
\[\calH_{p,q}^G = \left\{f \in \calH_{p,q} : f \circ g = f\right\}.\]
\end{theorem}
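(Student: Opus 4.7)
The plan is to obtain this decomposition by pulling everything back to the universal cover $S^{2n-1}$ and taking $G$-invariants of Folland's decomposition. First, because the hypotheses $\gcd(\ell_j, k) = 1$ guarantee that $G$ acts freely (and holomorphically, since $G \subset U(n)$) on $S^{2n-1}$, the projection $\pi \colon S^{2n-1} \to L(k;\ell_1,\dots,\ell_n)$ is a CR covering map. Standard descent then identifies $L^2\!\left(L(k;\ell_1,\dots,\ell_n)\right)$ with the closed subspace $L^2(S^{2n-1})^G$ of $G$-invariant functions on the sphere via $f \mapsto f \circ \pi$, and moreover identifies the Kohn Laplacian on the lens space with the restriction of $\Box_b$ on $S^{2n-1}$ to this invariant subspace.

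Next I would verify that each Folland component $\calH_{p,q}$ is preserved by the $G$-action. This is where the two key properties of $G$ intervene: since $g \in U(n)$ is $\C$-linear, precomposition with $g$ sends any polynomial of bidegree $(p,q)$ in $z_1,\dots,z_n,\overline{z}_1,\dots,\overline{z}_n$ to another polynomial of the same bidegree; and since $g$ is a CR automorphism of $S^{2n-1}$, it commutes with $\Box_b$, so it preserves the eigenspace structure, and in particular the harmonic bidegree-$(p,q)$ part $\calH_{p,q}$. Thus every $\calH_{p,q}$ is a $G$-invariant subspace, and we may form the $G$-invariant subspace $\calH_{p,q}^G = \{f \in \calH_{p,q} : f \circ g = f\}$.

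Now I would combine the previous two steps. Because Folland's decomposition $L^2(S^{2n-1}) = \bigoplus_{p,q \geq 0} \calH_{p,q}$ is an orthogonal Hilbert-space direct sum and each summand is $G$-stable, taking $G$-invariants distributes over the sum (the orthogonal projection onto $\calH_{p,q}$ commutes with the unitary action of $G$, hence preserves $G$-invariance), giving
\[
L^2(S^{2n-1})^G \;=\; \bigoplus_{p,q \geq 0} \calH_{p,q}^G.
\]
Combined with the identification from the first step, this yields the claimed decomposition, and the eigenvalue on $\calH_{p,q}^G$ is inherited from $\calH_{p,q}$, namely $2q(p+n-1)$.

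The main technical point, and the place where one must be careful, is the identification of the Kohn Laplacian on the lens space with the restriction of $\Box_b$ on the sphere to $G$-invariant functions; once this is in place, everything else is a formal consequence of the invariance of $\calH_{p,q}$ under the unitary action of $G$. One could either invoke general theory of CR quotients by groups of CR automorphisms acting freely, or argue directly at the level of tangential Cauchy–Riemann operators $\overline{\partial}_b$ by noting that $\overline{\partial}_b$ on $L(k;\ell_1,\dots,\ell_n)$ lifts under $\pi$ to $\overline{\partial}_b$ on $S^{2n-1}$ acting on $G$-invariant functions, and similarly for its formal adjoint.
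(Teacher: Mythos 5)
Your proposal is correct and follows essentially the same route the paper takes: the paper simply notes that $G$ acts on $L^2(S^{2n-1})$ by precomposition and that each $\calH_{p,q}$ is $G$-invariant, then asserts the decomposition follows. You are just filling in the details the paper leaves implicit, namely the identification $L^2(L(k;\ell_1,\dots,\ell_n)) \cong L^2(S^{2n-1})^G$ via the CR covering $\pi$, the compatibility of $\Box_b$ with this identification, and the fact that taking $G$-invariants distributes over the orthogonal direct sum.
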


Just as on the sphere, each $\calH_{p,q}^G$ (presuming it is non-trivial) is an eigenspace of $\Box_b$ with eigenvalue $2q\left(p+n-1\right)$. It follows that the multiplicity of an eigenvalue $\lambda$ in the spectrum of $\Box_b$ on the lens space $G \setminus S^{2n-1}$ is given by 
\[\sum_{2q\left(p+n-1\right) = \lambda} \dim\calH_{p,q}^G,\]
where we further restrict $p,q$ such that $\calH_{p,q}^G \neq \left\{0\right\}$.
This observation gives the following proposition.

\begin{proposition} \label{prop:system}
The dimension of $\calH^G_{p,q}$ is equal to the number of solutions $\left(\alpha,\beta\right)$ to the system
\begin{align*}
\left|\alpha\right| = p, \quad \left|\beta\right| = q,\\
\alpha_1 = 0 \quad \text{or} \quad \beta_1 = 0,\\
\sum_{j = 1}^{n} \ell_j\left(\alpha_j - \beta_j\right) \equiv 0\mod k,
\end{align*}
where $\alpha=\left(\alpha_1,\ldots,\alpha_n\right)$ and $\beta = \left(\beta_1,\ldots,\beta_n\right)$ are $n$-tuples of nonnegative integers and $\left|\cdot\right|$ denotes the sum of these integers.
\end{proposition}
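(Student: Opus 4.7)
The plan is to exploit the explicit basis for $\mathcal{H}_{p,q}$ quoted from \cite{REU18} to show that each basis element is a simultaneous eigenvector for the $G$-action, and then identify $\mathcal{H}_{p,q}^G$ with the span of those basis elements whose eigenvalue equals $1$.

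First I will analyze how the precomposition operator $T_g : f \mapsto f \circ g$ interacts with the differential operators $\overline{D}^\alpha D^\beta$. Since $g$ is unitary, $|gz| = |z|$, so $T_g |z|^{2-2n} = |z|^{2-2n}$. A one-line chain-rule computation on the monomials $z_j$ and $\bar z_j$ gives the commutation relations
\[T_g \circ \frac{\partial}{\partial \bar z_j} = \zeta^{\ell_j}\,\frac{\partial}{\partial \bar z_j}\circ T_g, \qquad T_g \circ \frac{\partial}{\partial z_j} = \zeta^{-\ell_j}\,\frac{\partial}{\partial z_j}\circ T_g.\]
Commuting $T_g$ past all the derivatives in $\overline{D}^\alpha D^\beta$ (using that the $\partial/\partial z_j$'s and $\partial/\partial \bar z_j$'s pairwise commute) and then using $G$-invariance of $|z|^{2-2n}$ yields
\[T_g\bigl(\overline{D}^\alpha D^\beta |z|^{2-2n}\bigr) = \zeta^{\sum_j \ell_j(\alpha_j-\beta_j)}\,\overline{D}^\alpha D^\beta |z|^{2-2n}.\]

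Since $G = \langle g \rangle$ is cyclic and the basis of $\mathcal{H}_{p,q}$ consists of simultaneous $T_g$-eigenvectors, the invariant subspace $\mathcal{H}^G_{p,q}$ is precisely the span of those basis elements whose eigenvalue $\zeta^{\sum_j \ell_j(\alpha_j-\beta_j)}$ equals $1$. Because $\zeta$ is a primitive $k$-th root of unity, this is equivalent to the congruence $\sum_j \ell_j(\alpha_j-\beta_j) \equiv 0 \pmod k$. Combining this with the basis conditions $|\alpha|=p$, $|\beta|=q$, and $\alpha_1=0$ or $\beta_1=0$, the dimension of $\mathcal{H}^G_{p,q}$ equals the number of pairs $(\alpha,\beta)$ solving the stated system.

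The main piece of work is the commutation computation in the first paragraph: holomorphic derivatives pick up $\zeta^{-\ell_j}$ while antiholomorphic derivatives pick up $\zeta^{\ell_j}$, which is what produces the difference $\alpha_j - \beta_j$ in the eigenvalue rather than a sum. Once those signs are pinned down, the remainder is the standard observation that the trivial-isotypic component of a finite abelian group acting on a space with a simultaneous eigenbasis is spanned by the basis vectors with trivial character, so no further combinatorics is needed to extract $\dim \mathcal{H}^G_{p,q}$ as a counting problem.
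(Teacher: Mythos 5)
Your proof is correct and takes essentially the same approach as the paper: both identify $\mathcal{H}^G_{p,q}$ by observing that the basis $\overline{D}^\alpha D^\beta |z|^{2-2n}$ consists of simultaneous eigenvectors for the $g$-action (via the chain rule and unitarity of $g$), with eigenvalue $\zeta^{\sum_j \ell_j(\alpha_j-\beta_j)}$, so the dimension of the invariant subspace counts the basis vectors with eigenvalue $1$. Your version merely packages the chain-rule step as clean commutation relations for $T_g$ with $\partial/\partial z_j$ and $\partial/\partial \bar z_j$, which is a slightly more systematic presentation of the same computation.
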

\begin{proof}
    Note that the first two conditions on the system are given by the basis of $\mathcal{H}_{p,q}$. Now let $f_{\alpha,\beta} =     \overline{D}^{\alpha}D^{\beta}|z|^{2-2n}$. Recall that $f_{\alpha,\beta} \in \mathcal{H}_{p,q}^G$ if and only if $ f_{\alpha,\beta} \circ g = f_{\alpha,\beta}$. By the chain rule and the fact that $g$ can be thought of as a unitary matrix,
    \begin{align*}
    f_{\alpha,\beta} \left(w\right) &= \left(\overline{D}^\alpha D^\beta \left|z\right|^{2- 2n}\right)\big|_{z=w}\\
    &= \left(\overline{D}^\alpha D^\beta \left| g z\right|^{2- 2n}\right)\big|_{z=w}\\
    &= \zeta^{\sum_{j=1}^n \ell_j \left(\beta_j - \alpha_j\right)} \left(\overline{D}^\alpha D^\beta \left|z^{2 - 2n}\right|\right)\big|_{z=gw}\\
    &=\zeta^{\sum_{j=1}^n \ell_j \left(\beta_j - \alpha_j\right)} g \circ f_{\alpha,\beta}\left(w\right).
 \end{align*}
    Since the collection of the $f_{\alpha,\beta}$ comprise of a basis for $\calH_{p,q}$ on which $g$ acts diagonally, the dimension of the $G$-invariant subspace of $\calH_{p,q}$ is simply the number of these basis vectors that are fixed by $g$. Thus, $f_{\alpha,\beta}\circ g = f_{\alpha,\beta}$ if and only if the last condition in the proposition holds.
\end{proof}

\section{Results}

We now outline the main results of this paper. For $\lambda>0$, let $N\left(\lambda\right)$ denote the number of positive eigenvalues, including multiplicity, of $\square_b$ on $L^2\left(S^{2n-1}\right)$ that are at most $\lambda$. Similarly, let $N_L\left(\lambda\right)$ denote the number of positive eigenvalues, including multiplicity, of $\square_b$ on  $L^2\left(L(k;\ell_1,\ldots,\ell_n)\right)$ that are at most $\lambda$.
\begin{theorem}\label{thm:oneoverk}
Given a lens space $L\left(k;\ell_1,\ldots,\ell_n\right)$, we have 
\[\lim_{\lambda \rightarrow\infty}\frac{N_L(\lambda)}{N(\lambda)}=\frac{1}{k}.\]
\end{theorem}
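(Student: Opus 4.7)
My plan is to recast both eigenvalue counting functions as lattice-point sums via Theorem~\ref{thm:spectrum_box_b} and Proposition~\ref{prop:system}, then use Fourier analysis on $\mathbb{Z}/k\mathbb{Z}$ to isolate the main term $N(\lambda)/k$ and bound the remaining character sums by $o(N(\lambda))$.

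First I would write both counts as sums over $(p,q)\in\mathbb{N}^2$ with $q\geq 1$ and $2q(p+n-1)\leq\lambda$, with summands $\dim\calH_{p,q}$ and $\dim\calH_{p,q}^G$ respectively. By the proof of Proposition~\ref{prop:system}, the element $g^r$ acts diagonally on the basis $\{f_{\alpha,\beta}\}$ of $\calH_{p,q}$ by $\zeta^{r\sum_j\ell_j(\alpha_j-\beta_j)}$, so by averaging over $G$ one gets $\dim\calH_{p,q}^G = \frac{1}{k}\sum_{r=0}^{k-1}\operatorname{tr}(g^r\mid\calH_{p,q})$. Summing this identity over $(p,q)$ in the admissible range gives
\[
N_L(\lambda) = \frac{1}{k}N(\lambda) + \frac{1}{k}\sum_{r=1}^{k-1}E_r(\lambda),\qquad E_r(\lambda) := \sum_{\substack{q\geq 1 \\ 2q(p+n-1)\leq\lambda}}\operatorname{tr}\bigl(g^r\mid\calH_{p,q}\bigr).
\]
It therefore suffices to prove $E_r(\lambda) = o(N(\lambda))$ for each $r\in\{1,\dots,k-1\}$.

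Next I would apply inclusion-exclusion on the condition ``$\alpha_1=0$ or $\beta_1=0$'' to reduce each trace to a three-term combination of products $h_p(\mathbf u)h_q(\mathbf v)$, where $h_m$ is the complete homogeneous symmetric polynomial of degree $m$ and the entries of $\mathbf u,\mathbf v$ are $k$-th roots of unity $\zeta^{\pm r\ell_i}$, all distinct from $1$ since $\gcd(\ell_i,k)=1$ and $1\leq r\leq k-1$. A partial-fraction decomposition of the generating function $\prod_i(1-Tu_i)^{-1}$ yields $h_m(\mathbf u)=\sum_j P_j(m)u_j^m$ with $\deg P_j<n$ and $u_j\neq 1$, permitting Abel summation in $m$ via the uniform bound $\sum_{p=0}^M u_j^p = O(|1-u_j|^{-1})$. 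Combining these expansions with the binomial asymptotics in Theorem~\ref{thm:spectrum_box_b}, which give $N(\lambda)\asymp\lambda^n$ after integration over the hyperbolic region, the aim is to show $E_r(\lambda) = o(\lambda^n)$.

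The main obstacle I foresee is the coupling of $p$ and $q$ through the hyperbolic constraint $q(p+n-1)\leq\lambda/2$, which blocks a clean separation of the two exponential sums in a direct Abel summation. My remedy is a dyadic decomposition in $q$: on each slice $q\in[2^s,2^{s+1})$ the range of $p$ is $[0,O(\lambda/2^s)]$, and partial summation in $p$ on the slice exploits the bounded partial sums of $\sum_p u_j^p$; summing the slice errors over $0\leq s\leq\log_2(\lambda/2)$ should yield the required $o(\lambda^n)$ bound, with boundary regions (small $p$ or small $q$) handled separately by elementary estimates.
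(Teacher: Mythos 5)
Your reduction to $N_L(\lambda)=\tfrac1k N(\lambda)+\tfrac1k\sum_{r=1}^{k-1}E_r(\lambda)$ via character averaging is correct, and it is a genuinely different strategy from the paper's: the paper never introduces characters or exponential sums, but instead counts solutions to the congruence in Proposition~\ref{prop:system} directly, sandwiching the count between $\tfrac1k(\text{main term})\pm(\text{floor/ceiling error})$ via Lemmas~\ref{lem:lower_bound}--\ref{lem:beta1=0}, and then invoking Lemma~\ref{lem:lim=0} to show the error is lower order. Your identification of $\widetilde\chi_{p,q}(g^r)$ with products of complete homogeneous symmetric polynomials in roots of unity is exactly the computation appearing in Theorem~\ref{thm:genfunc}, and the partial-fraction step is fine. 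The approach buys conceptual clarity (it makes the $1/k$ main term appear for free from the trivial character) at the cost of having to estimate oscillatory sums over a hyperbolic region.

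However, the estimate as you have sketched it does not close. On a dyadic slice $q\in[2^s,2^{s+1})$, partial summation \emph{in $p$ only} saves one power of $p$ but leaves $|h_q(\mathbf v)|$ unestimated. For the $\alpha_1=0$ piece, $h_p(\mathbf u')$ has degree $\le n-2$ in $p$ while $h_q(\mathbf v)$ has degree $\le n-1$ in $q$; Abel summation in $p$ gives $O\bigl((\Lambda/2^s)^{n-2}\bigr)$ per fixed $q$, and multiplying by $\sum_{q\in\text{slice}}|h_q(\mathbf v)|=O(4^s\cdot 2^{s(n-2)})=O(2^{sn})$ yields $O(\Lambda^{n-2}4^s)$ per slice, which sums to $\Theta(\Lambda^n)$, with the \emph{largest} slice dominating — so this is not a small-$p$/small-$q$ boundary effect that elementary estimates can absorb. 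You must also exploit cancellation in $q$ for this piece (and symmetrically, cancellation in $p$ for the $\beta_1=0$ piece, whose degrees are swapped). This is precisely the role of the swapped summation order between the paper's Lemma~\ref{lem:alpha1=0} (outer sum in $p$) and Lemma~\ref{lem:beta1=0} (outer sum in $q$). A further wrinkle you do not address: Abel summation in $p$ with the $q$-dependent upper limit $P(q)=\lfloor\Lambda/q\rfloor-n+1$ produces boundary terms of the form $u^{P(q)}$, so the subsequent $q$-sum involves phases $u^{\lfloor\Lambda/q\rfloor-q}$; bounding these requires grouping $q$ by the value of $\lfloor\Lambda/q\rfloor$ (the hyperbola method), which is an additional nontrivial step. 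With these two fixes the strategy should go through, but as written the proposal has a real gap at the estimate $E_r(\lambda)=o(\lambda^n)$.
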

Note that since the volume of the lens space is scaled by $k$, it is not surprising that the eigenvalue counting functions scale similarly. Furthermore, combining this result with the explicit calculations in \cite{REU2020Weyl}, yields the following analog of Weyl's law.


\begin{corollary}
We have
\[\lim_{n\to\infty} \frac{N_L\left(\lambda\right)}{\lambda^n} = u_n \frac{ \operatorname{vol}\left(S^{2n-1}\right)}{k} = u_n \operatorname{vol}\left(L\left(k;\ell_1,\ldots,\ell_n\right)\right),\]
where $u_n$ is a universal constant depending only on $n$, given by \[u_n = \frac{n - 1}{n \left(2\pi\right)^n \Gamma\left(n + 1\right)} \int_{-\infty}^{\infty} \left(\frac{x}{\sinh x}\right)^n e^{-(n - 2)x}\,dx.\]
\end{corollary}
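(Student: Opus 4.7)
The plan is to obtain the corollary by combining Theorem \ref{thm:oneoverk} with the Weyl-type asymptotic for the Kohn Laplacian on the sphere established in \cite{REU2020Weyl}, which supplies
\[
\lim_{\lambda \to \infty} \frac{N(\lambda)}{\lambda^n} = u_n \operatorname{vol}(S^{2n-1}),
\]
for the explicit universal constant $u_n$ appearing in the statement. Assuming this sphere asymptotic is available as a black box, essentially all of the analytic difficulty has already been absorbed into Theorem \ref{thm:oneoverk} and into the reference.

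The first step is the elementary factorization
\[
\frac{N_L(\lambda)}{\lambda^n} = \frac{N_L(\lambda)}{N(\lambda)} \cdot \frac{N(\lambda)}{\lambda^n},
\]
valid for all $\lambda$ large enough that $N(\lambda) > 0$. Letting $\lambda \to \infty$, the first factor converges to $1/k$ by Theorem \ref{thm:oneoverk} and the second to $u_n \operatorname{vol}(S^{2n-1})$ by the sphere asymptotic. Multiplying the limits gives the first equality in the corollary. (The limit as written with $n \to \infty$ should presumably read $\lambda \to \infty$.)

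For the second equality I would carry out a short volume computation. Since each $\ell_j$ is coprime to $k$, the cyclic group $G$ acts freely on $S^{2n-1}$: if $g^m z = z$ for some $1 \le m < k$ and some unit vector $z$, then picking any coordinate $z_j \neq 0$ forces $\zeta^{m \ell_j} = 1$, so $k \mid m \ell_j$, and then $\gcd(\ell_j, k) = 1$ forces $k \mid m$, a contradiction. Because $G \subset U(n)$ acts by isometries, the projection $S^{2n-1} \to L(k;\ell_1,\ldots,\ell_n)$ is a $k$-sheeted Riemannian (and CR) covering, whence $\operatorname{vol}(L(k;\ell_1,\ldots,\ell_n)) = \operatorname{vol}(S^{2n-1})/k$. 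The main obstacle, namely reducing the lens-space counting function to the sphere counting function, has already been handled in Theorem \ref{thm:oneoverk}; what remains here is bookkeeping.
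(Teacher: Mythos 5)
Your proof is correct and matches what the paper intends: the paper's proof consists precisely of combining Theorem~\ref{thm:oneoverk} with the sphere asymptotic from \cite{REU2020Weyl} via the factorization $\frac{N_L(\lambda)}{\lambda^n} = \frac{N_L(\lambda)}{N(\lambda)}\cdot\frac{N(\lambda)}{\lambda^n}$, together with the observation that the free $G$-action makes $S^{2n-1} \to L(k;\ell_1,\ldots,\ell_n)$ a $k$-sheeted covering so that $\operatorname{vol}(L) = \operatorname{vol}(S^{2n-1})/k$. You are also right that the limit in the statement should read $\lambda\to\infty$ rather than $n\to\infty$.
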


As a consequence of Theorem \ref{thm:oneoverk}, the following corollary is immediate.

\begin{corollary} \label{cor:same_k}
If the lens spaces $L\left(k;\ell_1,  \ldots, \ell_n\right)$ and $L\left(k'; \ell_1', \ldots, \ell_n'\right)$ are CR isospectral, then $k = k'$.
\end{corollary}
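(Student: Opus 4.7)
The plan is to derive this corollary as a direct consequence of Theorem \ref{thm:oneoverk}. Both lens spaces in the hypothesis are written with $n$ indices $\ell_j$, so both are quotients of the same odd-dimensional sphere $S^{2n-1}$ and share the common sphere counting function $N(\lambda)$. Let $N_L(\lambda)$ and $N_{L'}(\lambda)$ denote the counting functions of $\Box_b$ on $L(k;\ell_1,\dots,\ell_n)$ and $L(k';\ell_1',\dots,\ell_n')$ respectively. Unwinding the definition in the introduction, CR isospectrality means that the two lists of eigenvalues with multiplicity coincide, which is exactly the statement that $N_L(\lambda) = N_{L'}(\lambda)$ for every $\lambda > 0$.

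With this reformulation in hand, the next step is to apply Theorem \ref{thm:oneoverk} to each lens space separately. Dividing by $N(\lambda)$ and passing to the limit as $\lambda \to \infty$ gives
\[
\frac{1}{k} \;=\; \lim_{\lambda \to \infty} \frac{N_L(\lambda)}{N(\lambda)} \;=\; \lim_{\lambda \to \infty} \frac{N_{L'}(\lambda)}{N(\lambda)} \;=\; \frac{1}{k'},
\]
where the middle equality uses the pointwise identity $N_L = N_{L'}$ secured in the previous step. Since $k$ and $k'$ are positive integers, this forces $k = k'$, which is exactly the claim.

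There is essentially no obstacle here; the corollary is a one-line deduction once Theorem \ref{thm:oneoverk} is available. The only mild point worth flagging is the implicit assumption that both lens spaces have the same ambient dimension, which is built into the notation of the statement. Were one to drop it, the dimensional information could still be recovered from the Weyl-type asymptotic $N_L(\lambda) \sim u_n \operatorname{vol}(L)\,\lambda^n$ of the associated corollary, since the exponent $n$ is then determined by the growth rate of the shared counting function.
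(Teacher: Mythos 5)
Your proof is correct and matches the paper's intent exactly: the paper labels this corollary as an immediate consequence of Theorem \ref{thm:oneoverk}, and your argument (equality of counting functions from isospectrality, then passing to the limit $N_L(\lambda)/N(\lambda) \to 1/k$ for each space) is precisely that deduction. The closing remark about recovering $n$ from the growth exponent is a reasonable aside but not needed, since the statement fixes a common $n$.
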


With Corollary \ref{cor:same_k}, we obtain the following result on isospectral quotients of $S^3$.
\begin{theorem}
\label{thm:isospec}
Let $k$ be an odd prime number. Let $L\left(k; \ell_1, \ell_2\right)$ and $L\left(k; \ell_1', \ell_2'\right)$ be the lens spaces generated by the groups $G$ and $G'$, respectively.  The following are equivalent.
\begin{enumerate}
\item $L\left(k; \ell_1, \ell_2\right)$ and $L\left(k; \ell_1', \ell_2'\right)$ are CR isometric.
\item $L\left(k; \ell_1, \ell_2\right)$ and $L\left(k; \ell_1', \ell_2'\right)$ are CR isospectral.
\item $\dim \calH_{p,q}^G = \dim \calH_{p,q}^{G'}$ for all $p,q\geq 0$.
\item There exists an integer $a$ and a permutation $\sigma$ such that $\left(\ell_1',\ell_2'\right) \equiv \left(a\ell_{\sigma\left(1\right)},a\ell_{\sigma\left(2\right)}\right)\mod k.$
\end{enumerate}
\end{theorem}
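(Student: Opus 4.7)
My plan is to establish the equivalence via the cycle $(4) \Rightarrow (1) \Rightarrow (2) \Rightarrow (3) \Rightarrow (4)$. The outside implications are essentially constructive. For $(4) \Rightarrow (1)$: since $\gcd(a,k)=1$, the element $g^a$ generates the same cyclic subgroup $G$ as $g$, so $G$ is also realized by the generator $\mathrm{diag}(\zeta^{a\ell_1},\zeta^{a\ell_2})$. The coordinate permutation $(z_1,z_2)\mapsto(z_{\sigma(1)},z_{\sigma(2)})$ is a unitary (hence CR) isometry of $S^3$ that conjugates this generator to the generator of $G'$, and therefore descends to a CR isometry of lens spaces. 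For $(1)\Rightarrow (2)$, any CR isometry intertwines the Kohn Laplacians, so preserves their spectra with multiplicities.

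For $(3) \Rightarrow (4)$, I would use a Molien-type generating function. Combining Proposition 1.6 with the orthogonal decomposition $\mathbb{C}[z,\bar z]_{(p,q)}=\bigoplus_{j\ge 0}|z|^{2j}\calH_{p-j,q-j}$ and the $G$-invariance of $|z|^2$, one derives
\[
F_G(s,t):=\sum_{p,q\ge 0}\dim\calH^G_{p,q}\,s^p t^q=\frac{1-st}{k}\sum_{j=0}^{k-1}\frac{1}{(1-\zeta^{j\ell_1}s)(1-\zeta^{j\ell_2}s)(1-\zeta^{-j\ell_1}t)(1-\zeta^{-j\ell_2}t)}.
\]
Hypothesis (3) gives $F_G\equiv F_{G'}$ as rational functions. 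Specializing to $s=0$ yields a one-variable rational function whose coefficient of $t^q$ counts pairs $(a,b)\in\mathbb{N}^2$ with $a+b=q$ and $a\ell_1+b\ell_2\equiv 0 \pmod k$. For prime $k$ and $\ell_1\not\equiv\ell_2$, this count is controlled by the single invariant $\rho\equiv -\ell_2/(\ell_1-\ell_2)\pmod k$, and the symmetry $\ell_1\leftrightarrow\ell_2$ corresponds to $\rho\leftrightarrow 1-\rho$. A short analysis of the congruence patterns for $q=1,\ldots,k-1$ shows that the sequence $(\dim\calH^G_{0,q})_q$ recovers $\{\rho,1-\rho\}$ uniquely, which in turn determines $(\ell_1,\ell_2)$ up to the scaling-and-permutation equivalence in (4).

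The main obstacle is $(2) \Rightarrow (3)$: the eigenvalue $2q(p+1)$ is \emph{not} injective in $(p,q)$, so the raw multiplicity $\mu(\lambda)=\sum_{(p,q):\,2q(p+1)=\lambda}\dim\calH^G_{p,q}$ collapses contributions from every factorization $q(p+1)=\lambda/2$. To disentangle them I would combine three inputs: first, Corollary 1.5, which secures $k=k'$ from the outset; second, the symmetry $\dim\calH^G_{p,q}=\dim\calH^G_{q,p}$ obtained by swapping $\alpha$ and $\beta$ in the Diophantine system of Proposition 1.6; and third, the explicit quasi-periodic structure of $\dim\calH^G_{p,q}$ for prime $k$, which severely constrains the admissible dimension sequences. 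With these in hand, an induction on $\lambda$ should peel off the contribution of the smallest factor in each factorization of $\lambda/2$ and identify each $\dim\calH^G_{p,q}$ individually from $\mu(\lambda)$, closing the cycle.
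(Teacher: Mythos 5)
Your outer implications $(4)\Rightarrow(1)$ and $(1)\Rightarrow(2)$ match the paper. The other two implications diverge, and one has a real gap.

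For $(3)\Rightarrow(4)$, you take a genuinely different route. The paper keeps the full two--variable generating function and proves (Lemma \ref{lem:independence}) that the rational functions $\bigl((z-\zeta^i)(w-\zeta^{-i})(z-\zeta^j)(w-\zeta^{-j})\bigr)^{-1}$ for $0\le i\le j\le k-1$ are linearly independent, whence the terms in $F_G = F_{G'}$ match up one by one and $(4)$ drops out. You instead specialize at $s=0$ and try to read $\{\rho,1-\rho\}$, $\rho\equiv -\ell_2/(\ell_1-\ell_2)$, off the single sequence $\bigl(\dim\calH^G_{0,q}\bigr)_q$. That may well be true, but the step you describe as ``a short analysis of the congruence patterns'' is precisely the content of the argument and you leave it unproved; it is a nontrivial combinatorial statement about the indicator $q\mapsto \mathbf{1}\!\left[(\rho q\bmod k)\le q\right]$ determining $\{\rho,1-\rho\}$. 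A cleaner version of your idea would be to argue linear independence of the one--variable kernels $\bigl((1-\zeta^{a}t)(1-\zeta^{b}t)\bigr)^{-1}$, but you would then also have to handle the collisions that occur when $\ell_1\equiv\pm\ell_2\pmod k$, which you do not mention. So this implication is plausible but not established.

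The real gap is $(2)\Rightarrow(3)$. You correctly identify the ingredients (Corollary \ref{cor:same_k} to pin down $k$, the symmetry $\dim\calH^G_{p,q}=\dim\calH^G_{q,p}$, and the quasi--periodic dimension formula), but the proposed ``induction on $\lambda$, peel off the smallest factor'' is not an argument. Every eigenvalue $\lambda=2q(p+1)$ produces a \emph{single} scalar equation in all the unknowns $\dim\calH^G_{p,q}$ with $q(p+1)=\lambda/2$, and there is no ordering that lets you solve for one at a time; the unknowns recur across infinitely many $\lambda$'s. The paper's actual resolution is structural: using the Dimension Lemma (Lemma \ref{Lem:dimension}) together with $d=d'$ (Proposition \ref{prop:gcd}), it reduces the differences $x_{p,q}=\dim\calH^G_{p,q}-\dim\calH^{G'}_{p,q}$ to a single $k\times k$ matrix $X$ via $x_{p,q}=x_{p\%k,\,q\%k}$, encodes all the spectral constraints as $C^\lambda\cdot X=0$ under the Frobenius pairing, proves $\spn_\R\{C^\lambda\}=T(\Sym_k)$ by a careful choice of $\lambda$'s ($2k$, $2k^2$, $2kr$, $2rs$, $2t$ with $r,s,t$ primes in prescribed residue classes), and concludes $X\in\Sym_k\cap T(\Skew_k)$, which forces $X=0$ because $x_{a,b}=(-1)^kx_{a,b}$ and $k$ is \emph{odd}. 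Your outline never explains where oddness of $k$ enters, which is a telltale sign that the decisive mechanism is missing. As written, $(2)\Rightarrow(3)$ is not proved.
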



\section{Diophantine Equations and Proof of Theorem \ref{thm:oneoverk}}

We begin our proof of Theorem \ref{thm:oneoverk} with a series of technical lemmas.

\begin{lemma}\label{lem:lim=0}
For $p,q \in \mathbb{Z}$, $p,q\geq 0$, define
\[a_{p,q}=\binom{
p+n-2}{n-2}\binom{q+n-2}{n-2} \text{ and } b_{p,q}=\left(\frac{p+q}{n-1}+1\right)\binom{p+n-2}{n-2}\binom{q+n-2}{n-2}.\]
The following equality holds,
\[\lim_{\lambda \rightarrow \infty} \frac{\sum_{p=0}^{\left\lfloor \lambda - n + 1 \right\rfloor}\sum_{q=1}^{\left\lfloor \frac{\lambda}{p+n-1} \right\rfloor}a_{p,q}}{\sum_{p=0}^{\left\lfloor \lambda - n + 1 \right\rfloor}\sum_{q=1}^{\left\lfloor \frac{\lambda}{p+n-1} \right\rfloor}b_{p,q}}=0.\]
\end{lemma}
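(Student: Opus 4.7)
The key observation is the identity $b_{p,q} = \left(\frac{p+q}{n-1} + 1\right) a_{p,q}$, which rewrites the ratio in the statement as a weighted average of the quantities $\frac{n-1}{p+q+n-1}$. These weights tend to $0$ as $p+q \to \infty$, so the plan is to split the numerator into a ``small $p+q$'' part that is bounded independently of $\lambda$ and a ``large $p+q$'' part that is $\epsilon$-small relative to the denominator.

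Concretely, given $\epsilon > 0$, I would first choose a positive integer $M$ large enough that $\frac{n-1}{M+n-1} < \epsilon$. For every pair $(p,q)$ with $p+q > M$, this forces $a_{p,q} < \epsilon\, b_{p,q}$. Splitting the numerator accordingly gives
\[
\sum_{p,q} a_{p,q} \;\leq\; \sum_{\substack{p+q \leq M \\ q \geq 1}} a_{p,q} \;+\; \epsilon \sum_{p,q} b_{p,q} \;=:\; C_{M,n} + \epsilon \cdot (\text{Denom}),
\]
where all the outer summation ranges are as in the statement of the lemma. The constant $C_{M,n}$ depends only on $M$ and $n$ once $\lambda$ is large enough that the summation region $\{p \leq \lfloor \lambda - n + 1\rfloor,\ 1\leq q \leq \lfloor \lambda/(p+n-1)\rfloor\}$ contains every pair with $p+q \leq M$ (there are only finitely many such pairs, and both bounds are eventually satisfied when $\lambda > (M+1)(M+n-1)$).

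To finish I would verify that the denominator tends to $\infty$. The simplest way is to examine the $p=0$ slice: $\sum_{q=1}^{\lfloor \lambda/(n-1)\rfloor} b_{0,q}$ is a sum of strictly positive terms whose number of summands grows without bound as $\lambda \to \infty$. Dividing the previous inequality through by the denominator yields
\[
\frac{\mathrm{Num}}{\mathrm{Denom}} \;\leq\; \frac{C_{M,n}}{\mathrm{Denom}} + \epsilon,
\]
and letting $\lambda \to \infty$ shows $\limsup_{\lambda \to \infty} \mathrm{Num}/\mathrm{Denom} \leq \epsilon$, from which the claim follows since $\epsilon$ was arbitrary.

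I do not anticipate a significant obstacle here. The argument rests on the exact algebraic identity $b_{p,q}/a_{p,q} = \frac{p+q}{n-1} + 1$ followed by the standard ``dominant weighting'' splitting trick; if anything, the only mild bookkeeping point is to confirm that the $p+q \leq M$ block is eventually inside the summation range, which is immediate from the inequality above.
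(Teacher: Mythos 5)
Your proof is correct and rests on the same essential idea as the paper: the exact relation $b_{p,q}=\left(\frac{p+q}{n-1}+1\right)a_{p,q}$ combined with a split of the index set into small and large $p+q$. The paper states the argument in the dual form (fixing $R$ and showing the reciprocal ratio eventually exceeds $R$, with a single positive term dominating the bounded negative contribution), while you fix $\epsilon$ and use that the denominator diverges, but the mechanism is the same.
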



\begin{proof}
The claim is intuitive due to the fact that $b_{p,q} = \left(\frac{p+q}{n-1}+1\right) a_{p,q}$. Let $A_m = \sum_{p=0}^{ m - n + 1 }\sum_{q=1}^{\left\lfloor \frac{m}{p+n-1} \right\rfloor}a_{p,q}$ and $B_m = \sum_{p=0}^{ m - n + 1 }\sum_{q=1}^{\left\lfloor \frac{m}{p+n-1} \right\rfloor}b_{p,q}$. 
Note that it suffices to show $B_m/A_m\to\infty$ as $m\to\infty$. Fix $R>0$, and note that $B_m/A_m > R$ if and only if
\[C_m =  \sum_{p=0}^{m- n + 1} \sum_{q=1}^{\left\lfloor \frac{m}{p + n - 1}\right\rfloor} \left(\frac{p + q}{n-1} + 1 - R \right) a_{p,q} > 0.\]
In particular, there is a negative part $N_R$ of the above sum contributed by indices where $\frac{p + q}{n-1}+ 1 - R < 0$. But for large enough $m$, this negative part is always dominated by the positive part. By taking $m \geq M + n - 1$ where $M$ is such that $\frac{M+1}{n-1} + 1 - R + N_R > 0$, the term given by $p = m - n + 1$ and $q = 1$  contributes enough to make $C_m$ positive.
\end{proof}

\subsection{Lower and Upper Bounds}

In this section we establish a lower and upper bound that will be used to control the number of solutions   to the Diophantine system in Proposition \ref{prop:system}.
We state first the lower bound and then the analogous upper bound.

\begin{lemma}\label{lem:lower_bound}
If $N,m,d \in \mathbb{Z}_{\geq 0}$, $m,d > 0$, then
\[
    \sum_{r=0}^N\binom{r+n-3}{n-3}\sum_{j=0}^{\left\lfloor \frac{1}{m}(N-r+1)\right\rfloor - 1}\left\lfloor \frac{1}{d}(jm+1)\right \rfloor \geq \frac{1}{md}\sum_{q=0}^{N}\left(\frac{q+n-1}{n-1}-d-\frac{3}{2}m- \left( d + \frac{3}{2} m \right) \frac{n-2}{q + n - 2}\right) \binom{q + n - 2}{n-2},
\]
where in the case that the upper index of a summation is $-1$, we take it to be equal to zero.
\end{lemma}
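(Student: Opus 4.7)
The plan is to rewrite both sides of the inequality as sums over $r$ weighted by $\binom{r+n-3}{n-3}$, and then verify the resulting inequality term by term in $r$.

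First, I would simplify the right-hand side using the elementary identities $\frac{q+n-1}{n-1}\binom{q+n-2}{n-2} = \binom{q+n-1}{n-1}$ and $\frac{n-2}{q+n-2}\binom{q+n-2}{n-2} = \binom{q+n-3}{n-3}$. Combined with hockey-stick summation and the Vandermonde-type convolution $\sum_{r=0}^{N}\binom{r+n-3}{n-3}\binom{N-r+k}{k} = \binom{N+n-2+k}{n-2+k}$ (applied for $k=0,1,2$), this recasts the right-hand side in the form
\[
\frac{1}{2md}\sum_{r=0}^{N} \binom{r+n-3}{n-3}(N-r+2)(N-r+1-2d-3m).
\]

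Next, I would bound the inner sum on the left using $\lfloor x\rfloor \geq x-1$. Letting $M_r := \lfloor(N-r+1)/m\rfloor$, summing the resulting arithmetic progression yields
\[
\sum_{j=0}^{M_r-1}\left\lfloor\frac{jm+1}{d}\right\rfloor \geq \sum_{j=0}^{M_r-1}\left(\frac{jm+1}{d}-1\right) = \frac{M_r(mM_r - m + 2 - 2d)}{2d}.
\]
It then suffices to establish the pointwise inequality
\[
\frac{M_r(mM_r - m + 2 - 2d)}{2d} \geq \frac{(N-r+2)(N-r+1-2d-3m)}{2md}
\]
for each $r \in \{0,\dots,N\}$. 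Setting $s = N-r+1$ and $\delta = s - mM_r \in [0, m)$, multiplying through by $2md$, and expanding both sides, the difference $\text{LHS}-\text{RHS}$ reduces after cancellation to
\[
s(2m - 2\delta + 1) + \delta^2 + (m+2d-2)\delta + 2d + 3m,
\]
whose every summand is non-negative for $s \geq 1$, $\delta \in [0, m)$, and $m,d \geq 1$.

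The main obstacle I anticipate is the algebraic bookkeeping: ensuring that the particular constant $(d + \tfrac{3}{2}m)$ on the right-hand side matches precisely the cumulative error produced by the two applications of $\lfloor x\rfloor \geq x-1$ (one in the inner sum over $j$, and one implicit in writing $mM_r = s - \delta$), and handling the boundary case $M_r = 0$ where the inner sum is empty and one must verify that the corresponding right-hand-side term is non-positive. Once the right-hand side has been recast into the $\binom{r+n-3}{n-3}$-weighted form via Vandermonde, the remainder of the argument is a transparent termwise algebraic verification.
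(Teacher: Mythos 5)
Your proposal is correct, and it uses the same two core ingredients as the paper — the elementary floor bound $\lfloor x\rfloor \geq x-1$ and the Vandermonde-type convolution identities — but organizes them in the opposite order, and this reorganization makes the argument cleaner. The paper first replaces the inner sum over $j$ by a closed form, then (attempting a termwise bound in $r$) replaces $\lfloor (N-r+1)/m\rfloor$ by $(N-r+1-m)/m$ \emph{inside} a product of two factors whose signs can vary, and only at the very end applies the convolution identities to pass from the $r$-indexed sum to the $q$-indexed sum. You instead apply the convolution identities \emph{to the right-hand side} immediately, reducing the lemma to a genuinely termwise inequality in $r$ with the uniform weight $\binom{r+n-3}{n-3}\geq 0$, and then verify that inequality exactly by writing $mM_r = s - \delta$ with $\delta = s - m\lfloor s/m\rfloor \in \{0,\dots,m-1\}$. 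Carrying $\delta$ explicitly through the expansion avoids any sign ambiguity: your final expression $s(2m-2\delta+1)+\delta^2+(m+2d-2)\delta + 2d+3m$ is manifestly a sum of non-negative terms for $s\geq 1$, $0\leq\delta\leq m-1$, $m,d\geq 1$, which I have checked is algebraically correct. In fact this expression already covers the $M_r=0$ boundary case you flag (there $\delta=s$ and the formula is still a sum of non-negative terms), so no separate verification is needed. One small bonus of your route worth noting: the paper's first displayed termwise inequality, namely
\[
\left\lfloor\tfrac{N-r+1}{m}\right\rfloor\left(\tfrac{m(\lfloor (N-r+1)/m\rfloor-1)}{2}+1-d\right)\geq \tfrac{1}{m}(N-r+1-m)\left(\tfrac{N-r+1}{2}-m+1-d\right),
\]
can fail when $N-r+1 < m$ (e.g.\ $m=2$, $d=1$, $N-r+1=1$ gives $0\geq 3/4$), precisely because it takes products of one-sided bounds without tracking signs; your $\delta$-bookkeeping sidesteps this and shows the lemma's conclusion is nevertheless correct for every $r$.
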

\begin{lemma}\label{lem:upper_bound}
If $N,m,d \in \Z_{\geq 0}$, $m, d > 0$, then
\[  \sum_{r=0}^N\binom{r+n-3}{n-3}\sum_{j=1}^{\left\lceil \frac{1}{m}(N-r+1)\right\rceil}\left\lceil \frac{jm}{d}\right\rceil 
  \leq \frac{1}{md}\sum_{q=0}^N\left( \frac{q+n-1}{n-1}+d + \frac{3}{2}m + (m^2+md)\frac{n-2}{q+n-2}\right)\binom{q+n-2}{n-2}.
\]
\end{lemma}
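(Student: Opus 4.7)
The plan is to mirror the lower-bound argument of Lemma \ref{lem:lower_bound}, using the elementary estimate $\lceil x\rceil \leq x + 1$ in place of $\lfloor x\rfloor \geq x - 1$, and then reducing everything to a short list of binomial identities. Writing $s = N - r + 1$ and $J = \lceil s/m\rceil$, replace each $\lceil jm/d\rceil$ by $jm/d + 1$ and sum the resulting arithmetic progression to get
\[
\sum_{j=1}^{J}\left\lceil\frac{jm}{d}\right\rceil \;\leq\; \frac{m\,J(J+1)}{2d} + J.
\]
Using $J \leq s/m + 1$, one has $J(J+1) \leq s^{2}/m^{2} + 3s/m + 2$, which yields
\[
\sum_{j=1}^{J}\left\lceil\frac{jm}{d}\right\rceil \;\leq\; \frac{s^{2}}{2md} + \frac{3s}{2d} + \frac{m}{d} + \frac{s}{m} + 1.
\]

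Next I would multiply by $\binom{r+n-3}{n-3}$ and sum over $0\leq r\leq N$. Linearizing the quadratic via $s^{2} = 2\binom{s+1}{2} - s$ and applying the Chu--Vandermonde identity
\[
\sum_{r=0}^{N}\binom{r+n-3}{n-3}\binom{N-r+k}{k} = \binom{N+n-2+k}{n-2+k}\qquad (k = 0, 1, 2)
\]
expresses the resulting upper bound on the LHS as an explicit linear combination of $\binom{N+n}{n}$, $\binom{N+n-1}{n-1}$, and $\binom{N+n-2}{n-2}$, whose coefficients can be read off directly.

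To match the right-hand side of the target inequality, I would rewrite it using the identities
\[
\binom{q+n-1}{n-1} = \tfrac{q+n-1}{n-1}\binom{q+n-2}{n-2}, \qquad \binom{q+n-3}{n-3} = \tfrac{n-2}{q+n-2}\binom{q+n-2}{n-2},
\]
together with the hockey-stick identity $\sum_{q=0}^{N}\binom{q+j}{j} = \binom{N+j+1}{j+1}$. This converts the RHS to a linear combination of the same three binomial coefficients, and a term-by-term comparison shows that the RHS exceeds the Step~2 bound by precisely $\frac{1}{2md}\binom{N+n-1}{n-1} \geq 0$, proving the inequality. The main obstacle is not conceptual but purely bookkeeping: there are five correction terms on the left and four on the right, and one must verify that the leading $\frac{1}{md}\binom{N+n}{n}$ cancels and every remaining piece is absorbed by a matching term. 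A minor subtlety at $n = 2$ is that $\binom{r+n-3}{n-3}$ must be interpreted via the convention $\binom{-1}{-1} = 1$ (which is implicit in Lemma \ref{lem:lower_bound} as well), so that the Chu--Vandermonde identity above continues to hold.
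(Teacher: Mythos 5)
Your proposal is correct and mirrors the paper's proof of Lemma \ref{lem:lower_bound}, which is exactly the route the authors indicate when they write that ``the proof for the upper bound follows similarly.'' The bookkeeping does close: after the three Vandermonde sums, your upper bound on the left-hand side equals $\frac{1}{md}\binom{N+n}{n}+\bigl(\tfrac1m+\tfrac{3}{2d}-\tfrac{1}{2md}\bigr)\binom{N+n-1}{n-1}+\bigl(\tfrac md+1\bigr)\binom{N+n-2}{n-2}$, while the right-hand side, via the hockey-stick identity and $\binom{q+n-1}{n-1}=\tfrac{q+n-1}{n-1}\binom{q+n-2}{n-2}$, $\binom{q+n-3}{n-3}=\tfrac{n-2}{q+n-2}\binom{q+n-2}{n-2}$, equals $\frac{1}{md}\binom{N+n}{n}+\bigl(\tfrac1m+\tfrac{3}{2d}\bigr)\binom{N+n-1}{n-1}+\bigl(\tfrac md+1\bigr)\binom{N+n-2}{n-2}$, exceeding your bound by $\frac{1}{2md}\binom{N+n-1}{n-1}\geq 0$ as you claim.
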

We prove only the lower bound as the proof for the upper bound follows similarly.
\begin{proof}
First, note that for any $M \geq -1$,
\[\sum_{j=0}^M\left\lfloor \frac{jm+1}{d} \right\rfloor \geq \sum_{j=0}^M\frac{1}{d}\left(jm-d+1\right)
=\frac{1}{d}(M+1)\left(\frac{mM}{2}+1-d\right).\]
Thus, it suffices to give the claimed lower bound for the sum
\[\sum_{r=0}^N\binom{r+n-3}{n-3}\frac{1}{d}\left\lfloor \frac{N-r+1}{m}\right\rfloor \left( \frac{m\left(\lfloor \frac{N-r+1}{m}\rfloor - 1 \right)}{2} + 1 - d \right).\]
We see that for $r \leq N$,
\begin{align*}
\left\lfloor \frac{N-r+1}{m}\right\rfloor\left(\frac{m\left(\left\lfloor \frac{N-r+1}{m}\right\rfloor -1\right)}{2}+1-d\right) 
&\geq \frac{1}{m} \left(N - r + 1 - m\right) \left(\frac{N - r + 1}{2} - m + 1 - d\right)\\
&=\frac{1}{m} \left( \frac{\left(N - r + 2\right)^2}{2} - \left(d + \frac{3}{2}m\right)\left(N- r + 2 \right) - \left(1 + m\right)\left(\frac{1}{2} - m - d \right)\right)\\
&\geq \frac{1}{m} \left( \frac{\left(N - r + 2\right)^2}{2} - \left(d + \frac{3}{2}m\right)\left(N- r + 1 \right) - \left(d + \frac{3}{2}m\right) \right)\\
&\geq \frac{1}{m} \left( \binom{N - r + 2}{2} - \left(d + \frac{3}{2}m\right)\left(N- r + 1 \right) - \left(d + \frac{3}{2}m\right) \right).
\end{align*}
Now, the combinatorial identities,
\[\sum_{q=0}^N\binom{q+n-1}{n-1}=\binom{N+n}{n}\quad \text{and} \quad \sum_{m=0}^n\binom{m}{j}\binom{n-m}{k-j}=\binom{n+1}{k+1}\text{ for }0 \leq j \leq k \leq n\]
imply 
\[\sum_{r=0}^N\binom{r+n-3}{n-3}\binom{N-r+2}{2}=\sum_{q=0}^N\binom{q+n-1}{n-1}\text{ and }\sum_{r=0}^N\binom{r+n-3}{n-3}\binom{N-r+1}{1}=\sum_{q=0}^N\binom{q+n-2}{n-2}.\]
Applying these identities and the lower bound above, the claim follows,
\begin{align*}
\sum_{r=0}^N\binom{r+n-3}{n-3}\frac{1}{d}\left\lfloor \frac{N-r+1}{m}\right\rfloor &\left( \frac{m\left(\lfloor \frac{N-r+1}{m}\rfloor - 1 \right)}{2} + 1 - d \right)
\\
&\geq\sum_{r=0}^N\binom{r+n-3}{n-3}\frac{1}{md}\left( \binom{N - r + 2}{2} - \left(d + \frac{3}{2}m\right)\left(N- r + 1 \right) - \left(d + \frac{3}{2}m\right) \right)\\
&= \frac{1}{md} \sum_{q=0}^N \left( \frac{q + n - 1}{n - 1 } - d - \frac{3}{2} m - \left( d + \frac{3}{2} m \right) \frac{n-2}{q + n - 2}\right) \binom{q + n - 2}{n-2}.
\end{align*}
\end{proof}

\subsection{Bounding the Number of Solutions to the Diophantine System}
Given a lens space $L = L\left(k; \ell_1,\ldots, \ell_n\right)$, with eigenvalue counting function $N_L(\lambda)$, note
\[N_L\left(2\lambda\right) = \sum_{p=0}^{\left\lfloor \lambda - n + 1 \right\rfloor}\sum_{q=1}^{\left\lfloor \frac{\lambda}{p+n-1}\right\rfloor }\dim\mathcal{H}_{p,q}^G\]
and
\[N\left(2\lambda\right) = \sum_{p=0}^{\left\lfloor \lambda - n + 1 \right\rfloor}\sum_{q=1}^{\left\lfloor \frac{\lambda}{p+n-1}\right\rfloor }\dim\mathcal{H}_{p,q} = \sum_{p=0}^{\left\lfloor \lambda - n + 1 \right\rfloor}\sum_{q=1}^{\left\lfloor \frac{\lambda}{p+n-1}\right\rfloor } \left(\frac{p+q}{n-1} + 1\right)\binom{p+n-2}{n-2}\binom{q+n-2}{n-2},\]
where $N\left(\lambda\right)$ is the eigenvalue counting function the Kohn Laplacian on $S^{2n-1}$. These formulas are due to the fact that the eigenvalues corresponding to $\mathcal{H}_{p,q}$ and $\mathcal{H}_{p,q}^G$ are $2q \left(p + n - 1 \right)$.

Recall from Proposition \ref{prop:system}, $\dim\mathcal{H}_{p,q}^G$ is equal to the number of solutions $\left(\alpha,\beta\right)$ to the Diophantine system
\[ 
\sum_{j=1}^{n}\ell_j\left(\alpha_j-\beta_j\right) \equiv 0 \tn{ mod } k,\]
where    $\alpha_j,\beta_j \geq 0$, $\alpha_1\beta_1 = 0$, and $\left|\alpha\right| = p$ and $\left|\beta\right| = q$.

Since our goal is to study $N_L\left(2\lambda\right)$, we can replace the conditions $\left|\alpha\right| = p$ and  $\left|\beta\right| = q$ with  $0 < \left|\beta\right|\left(\left|\alpha\right|+n-1\right) \leq \lambda$. That is, for fixed $\lambda > 0$, we study the number of solutions $\left(\alpha,\beta\right)$ to the Diophantine equation
 \begin{align}
 \sum_{j=1}^{n}\ell_j\left(\alpha_j-\beta_j\right) \equiv 0 \tn{ mod } k, 
 \label{eq:important!} 
 \end{align}
 where $\alpha_j,\beta_j \geq 0$, $\alpha_1 \beta_1 = 0$, and $     0 < \left|\beta\right|\left(\left|\alpha\right|+n-1\right) \leq \lambda $.
 
In particular, we establish upper and lower bounds for the number of solutions to the system (\ref{eq:important!}) in the following two lemmas, where they are distinguished by fixing $\alpha_1 = 0$ or $\beta_1 = 0$. We will make use of Lemma \ref{lem:lower_bound} and Lemma \ref{lem:upper_bound} as well as the following fact,
\begin{proposition*}
Given $a,b,c,d \in \N$, with $a \leq b$, the number of solutions to the system
\begin{align*}
    a &\leq x \leq b\\
    x &\equiv c \tn{ mod }d
\end{align*}
is either $\left\lfloor \frac{b-a+1}{d} \right\rfloor$ or $\left\lceil \frac{b-a+1}{d}\right \rceil$.
\end{proposition*}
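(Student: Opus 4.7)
The plan is to reduce to counting integers in an arithmetic progression lying inside an interval starting at $0$, and then partition that interval into blocks of length $d$ plus a shorter remainder. First I would substitute $y = x - a$ so that the original system becomes $0 \leq y \leq b - a$ together with $y \equiv c - a \pmod{d}$. Let $c' \in \{0, 1, \ldots, d-1\}$ denote the representative of $c - a$ modulo $d$. Counting the original solutions is then equivalent to counting $y \in \{0, 1, \ldots, b-a\}$ with $y \equiv c' \pmod{d}$, which is slightly more convenient.

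Next, I would write $b - a + 1 = qd + r$ with $0 \leq r < d$, so that $q = \lrfl{\frac{b-a+1}{d}}$, and partition $\{0, 1, \ldots, b-a\}$ into $q$ consecutive blocks of size $d$ followed by a final block $\{qd, qd+1, \ldots, qd + r - 1\}$ of size $r$. Since any $d$ consecutive integers contain exactly one representative of each residue class modulo $d$, each of the $q$ complete blocks contributes exactly one solution. The final block has only $r < d$ integers, so it contributes at most one solution, and in fact contributes $1$ precisely when $c' < r$ and $0$ otherwise. Hence the total count is either $q$ or $q+1$.

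Finally, I would check that $q$ and $q+1$ coincide with the claimed floor and ceiling. When $r = 0$ both $\lrfl{\frac{b-a+1}{d}}$ and $\lrceil{\frac{b-a+1}{d}}$ equal $q$, so the count of $q$ satisfies the proposition trivially; when $r > 0$ we have $\lrfl{\frac{b-a+1}{d}} = q$ and $\lrceil{\frac{b-a+1}{d}} = q+1$, so both possibilities $q$ and $q+1$ again fit the stated range. I do not anticipate a real obstacle — the argument is elementary — and the only point that requires a moment of care is the edge case $r = 0$, where one must notice that the floor and ceiling collapse to a single value so the two-sided statement still holds exactly.
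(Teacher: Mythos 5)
Your proof is correct. The paper states this proposition without proof, treating it as a standard fact about counting terms of an arithmetic progression inside an interval; your change of variables to $y = x - a$ followed by partitioning $\{0,\dots,b-a\}$ into $q = \lfloor (b-a+1)/d \rfloor$ full blocks of length $d$ and a short remainder block of length $r$ is a clean and complete justification, and you correctly handle the edge case $r = 0$ where the floor and ceiling coincide so the two-sided bound still holds.
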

\begin{lemma} \label{lem:alpha1=0}
For $\lambda>n$, when $\alpha_1=0$, the number of solutions to the system (\ref{eq:important!}) is greater than 
\[\frac{1}{k}\sum_{p=0}^{\left\lfloor\lambda - n + 1\right\rfloor}\binom{p+n-2}{n-2}\sum_{q=0}^{\left\lfloor \frac{\lambda}{p+n-1}\right\rfloor}\left(\frac{q+n-1}{n-1}-d-\frac{3}{2}m - \left(d + \frac{3}{2} m\right) \frac{n-2}{q + n - 2}\right)\binom{q+n-2}{n-2}\]
and less than 
\[\frac{1}{k}\sum_{p=0}^{\left\lfloor \lambda - n + 1 \right\rfloor}\binom{p+n-2}{n-2}\sum_{q=0}^{\left\lfloor \frac \lambda {p+n-1} \right\rfloor}\left( \frac{q+n-1}{n-1}+ d + \frac{3}{2}m + \left(m^2+md\right)\frac{n-2}{q+n-2}\right)\binom{q+n-2}{n-2}\]
where $m = \gcd \left(k,\ell_1-\ell_2\right)$ and $d = k/m$.
\end{lemma}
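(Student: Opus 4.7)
The plan is to fix an $\alpha$ with $\alpha_1 = 0$ and $|\alpha| = p$ (there are $\binom{p+n-2}{n-2}$ such tuples), reduce the congruence on $\beta$ to an arithmetic-progression condition on $\beta_1$, and then invoke Lemmas \ref{lem:lower_bound} and \ref{lem:upper_bound} with $N = \lfloor \lambda/(p+n-1)\rfloor$ and the parameters $m,d$ as stated.

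For each such $\alpha$, the residue $C \equiv \sum_{j=2}^n \ell_j\alpha_j \pmod k$ is fixed, and eliminating $\beta_2 = q - \beta_1 - \sum_{j=3}^n\beta_j$ in \eqref{eq:important!} converts the constraint on $\beta$ into
\[
(\ell_1-\ell_2)\beta_1 \equiv C - \ell_2 q - \sum_{j=3}^n(\ell_j-\ell_2)\beta_j \pmod k.
\]
Since $m \mid k$ and $\gcd(\ell_2,k) = 1$ give $\gcd(\ell_2, m) = 1$, for each fixed $(\beta_3,\ldots,\beta_n)$ with sum $v = \sum_{j=3}^n\beta_j$ the congruence is solvable iff $q$ lies in a specific residue class $q \equiv q_0 \pmod m$ (with $q_0$ depending on $\alpha$ and $(\beta_3, \ldots, \beta_n)$). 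When solvable, the valid $\beta_1\in[0,q-v]$ form an arithmetic progression of common difference $d$, containing either $\lfloor(q-v+1)/d\rfloor$ or $\lceil(q-v+1)/d\rceil$ members, as in the Proposition.

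Next, for fixed $\alpha$, I will count valid $(\beta,q)$ with $q\in[0,N]$. For each $(\beta_3,\ldots,\beta_n)$ of sum $v$, setting $s_0=(q_0-v)\bmod m\in[0,m-1]$, the admissible $q$'s are $v+s_0+jm$ for $j=0,1,\ldots,\lfloor(N-v-s_0)/m\rfloor$, so the total $\beta_1$-count contributed by this tuple is $\sum_j \lfloor(s_0+jm+1)/d\rfloor$ (respectively $\lceil\cdot\rceil$ for the upper estimate). Using $0\leq s_0\leq m-1$, the former is bounded below by $\sum_{j=0}^{\lfloor(N-v+1)/m\rfloor-1}\lfloor(jm+1)/d\rfloor$, and the latter is bounded above (after reindexing $j\mapsto j+1$) by $\sum_{j=1}^{\lceil(N-v+1)/m\rceil}\lceil jm/d\rceil$. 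Summing over $(\beta_3,\ldots,\beta_n)$ of fixed sum $v$ introduces the factor $\binom{v+n-3}{n-3}$, and summing over $v\in[0,N]$ reproduces exactly the left-hand sides of Lemmas \ref{lem:lower_bound} and \ref{lem:upper_bound}. Applying those lemmas, multiplying by the $\binom{p+n-2}{n-2}$ choices of $\alpha$, using $md=k$, and summing over $p$ then yields the two-sided bounds in the statement.

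The main obstacle is the careful bookkeeping between the offset $s_0 \in [0,m-1]$ and the $j$-range: specifically verifying $\lfloor(N-v+1)/m\rfloor-1\leq\lfloor(N-v-s_0)/m\rfloor$ (ensuring the lower-bound $j$-range sits inside the actual one) and $\lfloor(N-v-s_0)/m\rfloor+1\leq\lceil(N-v+1)/m\rceil$ (for the upper bound, after reindexing). Both follow cleanly from $0\leq s_0\leq m-1$, but matching the precise shape of the lemmas' inner sums and handling the shift in the upper-bound comparison is where the care is needed; once this is in place, Lemmas \ref{lem:lower_bound} and \ref{lem:upper_bound} do the remaining work.
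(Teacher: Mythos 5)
Your proposal is correct and matches the paper's argument essentially step for step: fix $\alpha$ with $\alpha_1=0$ via stars and bars, fix $\beta_3,\ldots,\beta_n$, reduce to a congruence on $q$ modulo $m$ for solvability of $\beta_1$, count $\beta_1$'s in an arithmetic progression modulo $d$, and bound the $j$-ranges using the offset $s_0$ (the paper's $c$) before invoking Lemmas \ref{lem:lower_bound} and \ref{lem:upper_bound}. The bookkeeping you flag as the main obstacle is exactly what the paper handles with its term-wise inequalities $\lfloor\frac{1}{d}(c+jm+1)\rfloor\geq\lfloor\frac{1}{d}(jm+1)\rfloor$ and $\lceil\frac{1}{d}(c+jm+1)\rceil\leq\lceil\frac{1}{d}(j+1)m\rceil$, so no genuinely different route is taken.
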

\begin{lemma} \label{lem:beta1=0}
For $\lambda > n$, when $\beta_1=0$, the number of solutions to the system (\ref{eq:important!}) is greater than 
\[\frac{1}{k}\sum_{q=1}^{\left\lfloor \frac{\lambda}{n-1}\right\rfloor}\binom{q+n-2}{n-2}\sum_{p=0}^{\left\lfloor \frac{\lambda}{q}-n+1  \right\rfloor}\left(\frac{p+n-1}{n-1}-d-\frac{3}{2}m - \left(d + \frac{3}{2}m\right) \frac{n-2}{p + n - 2}\right)\binom{p+n-2}{n-2}\]
and less than 
\[\frac{1}{k}\sum_{q=1}^{\left\lfloor \frac{\lambda}{n-1} \right\rfloor}\binom{q+n-2}{n-2}\sum_{p=0}^{\left\lfloor\frac{\lambda}{q}-n+1\right\rfloor}\left( \frac{p+n-1}{n-1} + d + \frac{3}{2}m + \left(m^2+md\right)\frac{n-2}{p+n-2} \right)\binom{p+n-2}{n-2}\]
where $m = \gcd \left(k,\ell_1-\ell_2\right)$ and $d = k/m$.
\end{lemma}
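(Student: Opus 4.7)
The plan is to mirror the proof of Lemma \ref{lem:alpha1=0}, swapping the roles of $\alpha$ and $\beta$ throughout. Since $\beta_1 = 0$, the remaining coordinates $\beta_2, \dots, \beta_n$ range over nonnegative integers summing to $q$, yielding $\binom{q+n-2}{n-2}$ choices of $\beta$. This factor comes out of the sum, so the outermost summation is now indexed by $q$. The truncation $|\beta|(|\alpha|+n-1) \leq \lambda$ together with $q \geq 1$ forces the order of summation to be $q$ first, then $p$: I get $q \leq \lfloor \lambda/(n-1) \rfloor$, and for each such $q$, $p \leq \lfloor \lambda/q - n + 1 \rfloor$, matching the ranges in the statement.

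For each fixed $\beta$, the remaining task is to count $\alpha$ with $|\alpha| = p$ and $\sum_{j=1}^n \ell_j \alpha_j \equiv c \mod k$, where $c := \sum_{j=2}^n \ell_j \beta_j$. Following the parametrization used in the $\alpha_1=0$ case, I would fix $\alpha_3,\dots,\alpha_n$ with sum $r$ (contributing $\binom{r+n-3}{n-3}$ choices), so that $\alpha_1+\alpha_2 = p-r$. Substituting $\alpha_2 = p-r-\alpha_1$ reduces the congruence to $(\ell_1-\ell_2)\alpha_1 \equiv c'' \mod k$ for a constant $c''$ depending on $c$, $p-r$, and the chosen $\alpha_j$'s. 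With $m=\gcd(k,\ell_1-\ell_2)$ and $d=k/m$, the unnumbered proposition cited just above the statement shows the number of $\alpha_1 \in \{0,\dots,p-r\}$ solving the congruence is either zero (when $m\nmid c''$) or one of $\lfloor (p-r+1)/d\rfloor$, $\lceil (p-r+1)/d\rceil$ (when $m\mid c''$).

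Summing these counts over $(\alpha_3,\dots,\alpha_n)$ and then over $p$ in the allowed range reproduces, up to the factor $1/k$, exactly the left-hand sides of Lemma \ref{lem:lower_bound} and Lemma \ref{lem:upper_bound} with $N=\lfloor \lambda/q - n+1\rfloor$. Applying those two lemmas, reinstating the factor $\binom{q+n-2}{n-2}$, and summing over $q$ in the valid range then yields the two inequalities stated in the lemma.

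The main subtlety I anticipate is the same as in the proof of Lemma \ref{lem:alpha1=0}: tracking how the residue class of $c''$ modulo $m$ varies as $(\alpha_3,\dots,\alpha_n)$ ranges, so that the inner counts combine into the $\sum_j \lfloor (jm+1)/d\rfloor$-type structure appearing on the left-hand sides of Lemma \ref{lem:lower_bound} and Lemma \ref{lem:upper_bound}. Once that bookkeeping has been carried over from the $\alpha_1=0$ case under the $\alpha\leftrightarrow\beta$ relabeling and the reversal of the order of summation, no essentially new estimate is required to finish the argument.
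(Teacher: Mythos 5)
Your proposal is correct and follows exactly the approach the paper intends: the authors state that the proof of Lemma \ref{lem:beta1=0} ``follows the same argument'' as Lemma \ref{lem:alpha1=0}, and your plan is precisely that relabeling, with the outer binomial factor now coming from the free $\beta$'s, the region $\{q\geq 1,\ p\geq 0,\ q(p+n-1)\leq\lambda\}$ re-sliced as $q$-then-$p$, and Lemmas \ref{lem:lower_bound}--\ref{lem:upper_bound} applied with $N=\lfloor\lambda/q-n+1\rfloor$. The one small misstatement is your closing sentence about ``the residue class of $c''$ modulo $m$ varying as $(\alpha_3,\dots,\alpha_n)$ ranges'': the $\sum_j\lfloor(jm+1)/d\rfloor$ structure in Lemmas \ref{lem:lower_bound}--\ref{lem:upper_bound} actually comes from varying $p$ (for \emph{fixed} $q$, $\beta$, and $\alpha_3,\dots,\alpha_n$), since the valid $p$'s form an arithmetic progression of step $m$ and the count of $\alpha_1\in[0,p-r]$ in each residue class mod $d$ is what produces $\lfloor(jm+1)/d\rfloor$; the sum over $(\alpha_3,\dots,\alpha_n)$ merely contributes the $\binom{r+n-3}{n-3}$ factor, and the bounds are chosen so the unknown residue $c$ never needs to be tracked.
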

We prove only Lemma \ref{lem:alpha1=0}, as the proof of Lemma \ref{lem:beta1=0} follows the same argument.
\begin{proof}
The outline of the argument is as follows. Recall that we want to estimate the number of $\left(\alpha,\beta\right)$ solving the Diophantine system, (\ref{eq:important!}). To do this, we first note that by stars and bars, we can find a number of candidates for $\alpha$. Thus, by fixing some $\alpha'$, it suffices to find the number of $\beta$ such that $\left(\alpha',\beta\right)$ is a solution to (\ref{eq:important!}). To reduce the problem further, by another application of stars and bars, we can find a number of values of $\beta_3,\ldots,\beta_n$ that may contribute to solutions of (\ref{eq:important!}). So for fixed $\alpha',\beta_3,\ldots,\beta_n$, by using the above proposition, we estimate the number of $\beta_1$ so that $\left(\alpha',\beta_1,\beta_2,\beta_3,\ldots,\beta_n\right)$ solve (\ref{eq:important!}). Note that $\beta_2$ is determined by $\beta_1,\beta_3,\ldots,\beta_n$.

We now proceed with the details. For $p \in \Z_{\geq 0}$, there are $\binom{p+n-2}{n-2}$ values of $\alpha$ where $\alpha_1=0$ and $\left|\alpha\right|=p$. Similarly, for $r \in \Z_{\geq 0}$, there are $\binom{r+n-3}{n-3}$ values for $\beta_3,\ldots,\beta_n$ such that $\sum_{j=3}^n\beta_j=r$. Now we would like to compute bounds on the number of $\beta_1$ that yield a solution to (\ref{eq:important!}) along with $\alpha,\beta_3,\ldots,\beta_n$ for a fixed value of $\left|\alpha\right|=p$ and $\sum_{j=3}^n\beta_j=r$, and fixed $0 \leq p \leq \left\lfloor\lambda-n+1\right\rfloor$. Note that from this information, $q = \left|\beta\right|$ is not fixed and it can take different values as $\beta_1$ changes. However, we may write $\beta_2 = q - r - \beta_1$ and therefore,
\[\left(\ell_1-\ell_2\right)\beta_1\equiv \sum_{j=2}^n\ell_j\alpha_j - \ell_2\left(q-r\right) - \sum_{j=3}^n\ell_j\beta_j \tn{ mod } k.\]
Since $m = \tn{gcd}\left(k,\ell_1-\ell_2\right)$ and $dm = k$, there exists a $d'$ coprime to $d$ so that $d'm = \ell_1 - \ell_2$. It follows that,
\[d'm\beta_1 \equiv \sum_{j=2}^n\ell_j\alpha_j - \ell_2\left(q-r\right) - \sum_{j=3}^n\ell_j\beta_j \tn{ mod } md.\]
The existence of $\beta_1$ that satisfies this equation is equivalent to $m$ dividing $\sum_{j=2}^n \ell_j\alpha_j -\ell_2\left(q-r\right)-\sum_{j=3}^n \ell_j \beta_j$ as $d$ and $d'$ are coprime. Furthermore, this division condition is equivalent to 
\begin{equation}\label{eq:q_congruence}
   q \equiv \ell_2^{-1}\left(\ell_2r+\sum_{j=2}^n\ell_j\alpha_j-\sum_{j=3}^{n}\ell_j\beta_j\right) \tn{ mod } m.
\end{equation}
Since $q$ is subject to the conditions $r \leq q \leq \left\lfloor\frac{\lambda}{p+n-1}\right\rfloor$, by the above proposition, the number of solutions to (\ref{eq:q_congruence}) is bounded by
\[\left\lfloor \frac{1}{m}\left( \left\lfloor \frac{\lambda}{p+n-1}\right\rfloor - r + 1)\right) \right\rfloor\text{ and } \left\lceil \frac{1}{m}\left( \left\lfloor \frac{\lambda}{p+n-1}\right\rfloor - r + 1)\right) \right\rceil.\]
Note that if there is no $\beta_1$ that solves (\ref{eq:important!}) given $\alpha,\beta_3,\ldots,\beta_n$, then the lower bound is zero with no harm. This case corresponds to when the sum $\sum_{j=0}^{\left\lfloor \frac{1}{m} \left(N - r + 1 \right)\right\rfloor - 1} \left\lfloor \frac{1}{d} \left(jm + 1 \right)\right\rfloor$ has its upper index equal to $-1$ in Lemma \ref{lem:lower_bound}. In particular, we continue with our analysis by assuming that there is such a $\beta_1$, and therefore $q$, satisfying (\ref{eq:q_congruence}). 
Note that if (\ref{eq:q_congruence}) is satisfied, we can divide the equation by $m$ to obtain
\begin{equation}\label{eq:b1_congruence}
    \beta_1 \equiv \left(d'\right)^{-1}\frac{1}{m}\left(\sum_{j=2}^n\ell_j\alpha_j-\ell_2\left(q-r\right)-\sum_{j=3}^n\ell_j\beta_j\right) \tn{ mod } d.
\end{equation}
For a fixed value of $q$ which solves (\ref{eq:q_congruence}), since $0 \leq \beta_1 \leq q-r$, we can bound the number of solutions to (\ref{eq:b1_congruence}) by
\[\left\lfloor \frac{1}{d}(q-r+1) \right\rfloor \text{ and }\left\lceil \frac{1}{d}(q-r+1) \right\rceil.\]
First we look at the lower bound.
The values for $q$ which solve (\ref{eq:q_congruence}) are of the form $r+c + jm$, where $0 \leq c < m$ is some integer and $j$ is also an integer ranging from $0$ to the number of solutions of (\ref{eq:q_congruence}) minus one. Note that for all $j$, 
\[\left\lfloor \frac1d((r+c+jm) - r + 1) \right\rfloor \geq \left\lfloor \frac1d(jm+1) \right\rfloor,\]
and therefore a lower bound on the number of solutions to both (\ref{eq:q_congruence}) and (\ref{eq:b1_congruence}) is
\[\sum_{j=0}^{\left\lfloor \frac{1}{m}\left( \left\lfloor \frac{\lambda}{p+n-1}\right\rfloor - r + 1\right) \right\rfloor-1}\left\lfloor\frac{1}{d}(jm+1)\right\rfloor.\]
So a lower bound on the number of solutions to (\ref{eq:important!}) when $\alpha_1=0$ is
\[\sum_{p=0}^{\lfloor\lambda - n + 1\rfloor}\binom{p+n-2}{n-2}\sum_{r=0}^{\left\lfloor \frac{\lambda}{p+n-1}\right\rfloor}\binom{r+n-3}{n-3}\sum_{j=0}^{\left\lfloor \frac{1}{m}\left( \left\lfloor \frac{\lambda}{p+n-1}\right\rfloor - r + 1\right) \right\rfloor-1}\left\lfloor\frac{1}{d}(jm+1)\right\rfloor.\]
By Lemma \ref{lem:lower_bound}, the first part of the lemma follows.

Now we examine the upper bound. Note that for all $j$,
\[\left\lceil \frac1d((r+c+jm)-r+1)\right\rceil \leq \left\lceil \frac 1d(j+1)m \right\rceil.\]
So an upper bound on the number of solutions to the system when $\alpha_1 = 0$ is
\[\sum_{p=0}^{\left\lfloor \lambda - n + 1 \right\rfloor}\binom{p+n-2}{p}\sum_{r=0}^{\left\lfloor \frac \lambda {p+n-1} \right\rfloor}\binom{r+n-3}{n-3}\sum_{j=1}^{\left\lceil \frac{1}{m}(\lfloor \frac \lambda {p+n-1} \rfloor-r+1) \right\rceil}\left\lceil \frac{jm}{d}\right\rceil.\]
By Lemma \ref{lem:upper_bound}, the second part of the lemma follows.
\end{proof}

\subsection{Proof of Theorem \ref{thm:oneoverk}}
We now use Lemma \ref{lem:alpha1=0} and Lemma \ref{lem:beta1=0} and our reformulation of $N_L\left(2\lambda\right)$ as the number of solutions to equation (\ref{eq:important!}) to prove Theorem \ref{thm:oneoverk}.


\begin{proof}
To get a lower bound on sum $N_L(2\lambda)$, we note that we have three parts.
First, from Lemma \ref{lem:alpha1=0}, the part where $\alpha_1=0$:
\begin{align*}
\frac{1}{k}\sum_{p=0}^{\lfloor \lambda - n + 1 \rfloor}\sum_{q=0}^{\left\lfloor \frac{\lambda}{p+n-1}\right\rfloor}&\left(\frac{q+n-1}{n-1}-d-\frac{3}{2}m - \left(d + \frac{3}{2} m \right) \frac{n-2}{q + n - 2} \right)\binom{p+n-2}{n-2}\binom{q+n-2}{n-2}\\
\hspace{-20mm}=& \frac{1}{k}\sum_{p=0}^{\lfloor \lambda - n + 1 \rfloor}\sum_{q=1}^{\left\lfloor \frac{\lambda}{p+n-1}\right\rfloor}\left(\frac{q+n-1}{n-1}-d-\frac{3}{2}m - \left(d + \frac{3}{2} m \right) \frac{n-2}{q + n - 2} \right)\binom{p+n-2}{n-2}\binom{q+n-2}{n-2}\\
&+\frac{1}{k}\sum_{p=0}^{\lfloor\lambda-n+1\rfloor}\left(1-2d-3m \right)\binom{p+n-2}{n-2}.
\end{align*}

Next, from Lemma \ref{lem:beta1=0}, the part where $\beta_1=0$:
\[\frac{1}{k}\sum_{p=0}^{\left\lfloor \lambda - n + 1 \right\rfloor}\sum_{q=1}^{\left\lfloor \frac{\lambda}{p+n-1} \right\rfloor}\left( \frac{p+n-1}{n-1} - d -\frac{3}{2}m - \left(d + \frac{3}{2} m \right) \frac{n-2}{p + n - 2}\right)\binom{p+n-2}{n-2}\binom{q+n-2}{n-2}.\]
Note that here we swapped the order of summation from the original statement of Lemma \ref{lem:beta1=0}. 
Finally, we have the part to be subtracted off which is given by $\alpha_1=\beta_1=0$. This part is bounded above by a formula from stars and bars:
\[\sum_{p=0}^{\left\lfloor \lambda - n + 1 \right\rfloor}\sum_{q=1}^{\left\lfloor \frac{\lambda}{p+n-1} \right\rfloor}\binom{p+n-2}{n-2}\binom{q+n-2}{n-2}.\]
Putting this together, a lower bound of $N_L \left(2\lambda\right)$ is
\[
\frac{1}{k}\sum_{p=0}^{\left\lfloor \lambda - n + 1 \right\rfloor}\sum_{q=1}^{\left\lfloor \frac{\lambda}{p+n-1} \right\rfloor} \left(\frac{p + q}{n - 1} + A +\frac{B}{q + n - 2} + \frac{C}{p + n - 2}\right)\binom{p+n-2}{n-2}\binom{q+n-2}{n-2} + \frac{1}{k}\sum_{p=0}^{\lfloor\lambda-n+1\rfloor}D\binom{p+n-2}{n-2},\]
where $A,B,C,D$ are constants dependent only on $m,d$ and $n$. By Lemma \ref{lem:lim=0}, we know
\[\lim_{\lambda \rightarrow \infty} \frac{\sum_{p=0}^{\lfloor \lambda - n + 1 \rfloor}\sum_{q=1}^{\left\lfloor \frac{\lambda}{p+n-1} \right\rfloor}\binom{p+n-2}{n-2}\binom{q+n-2}{n-2}}{\sum_{p=0}^{\lfloor \lambda - n + 1 \rfloor}\sum_{q=1}^{\left\lfloor \frac{\lambda}{p+n-1} \right\rfloor}\left(\frac{p+q}{n-1}+1\right)\binom{p+n-2}{n-2}\binom{q+n-2}{n-2}}=0.\]
Therefore, $\lim_{\lambda\to\infty} \frac{N_L\left(\lambda\right)}{N \left(\lambda\right)}\geq \frac{1}{k}$. The upper bound follows similarly, except there is no need to consider the part where $\alpha_1 = \beta_1 = 0$.
\end{proof}
\begin{remark}
This proof shows furthermore that $N_L\left(2\lambda\right)$ has asymptotic behavior, \[u_n \operatorname{vol} \left(L\left(k;\ell_1,\ldots,\ell_n\right)\right) + O \left(\sum_{p=0}^{\left\lfloor \lambda - n + 1 \right\rfloor} \sum_{q = 1}^{\left\lfloor \frac{\lambda}{p + n - 1 } \right\rfloor} \binom{ p + n - 2}{n - 2} \binom{q + n - 2}{n - 2}\right).\]By some 
calculations, we conjecture that this remainder term can be bounded above by $O \left(\lambda^{n - 1} \log \lambda\right)$ but not $O \left(\lambda^{n-1}\right)$. If true, this would be similar to the remainder term
on compact Heisenberg manifolds in \cite{strichartz2015}, except the fact that the authors speculates in the paper that the estimate can be improved to $O\left(\lambda^{n}\right)$.
\end{remark}

\section{Isospectral lens spaces and Proof of Theorem \ref{thm:isospec}}
In this section, we provide more details and a proof of Theorem \ref{thm:isospec}. For convenience, we restate it here.

\noindent\textbf{Theorem \ref{thm:isospec}.}
Let $k$ be an odd prime number. Let $L(k; \ell_1, \ell_2)$ and $L(k; \ell_1', \ell_2')$ be the lens spaces generated by the groups $G$ and $G'$, respectively.  The following are equivalent.
\begin{enumerate}
\item $L(k; \ell_1, \ell_2)$ and $L(k; \ell_1', \ell_2')$ are CR isometric.
\item $L(k; \ell_1, \ell_2)$ and $L(k; \ell_1', \ell_2')$ are CR isospectral.
\item $\dim \calH_{p,q}^G = \dim \calH_{p,q}^{G'}$ for all $p,q\geq 0$.
\item There exists an integer $a$ and a permutation $\sigma$ such that $\left(\ell_1',\ell_2'\right) \equiv \left(a\ell_{\sigma\left(1\right)},a\ell_{\sigma\left(2\right)}\right)\mod k$.
\end{enumerate}

\subsection{(2) implies (3)}


Of the four implications we show, this is the only one that requires $k$ to be an odd prime. In particular, we need this assumption after Lemma \ref{Lem:dimension}. Fix a lens space $L(k;\ell_1,\ell_2)$ and let $G$ be the corresponding group. Let $d= \gcd(k, \ell_1- \ell_2)$. We begin with some observations about $\dim\calH^G_{p,q}$.
\begin{lemma}
\label{lem:symmetric}
$\dim\calH^G_{p,q} =  \dim\calH^G_{q,p}$ for all $p,q$.
\end{lemma}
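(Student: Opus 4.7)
The approach is to exhibit an explicit bijection between the sets of $(\alpha,\beta)$ that Proposition~\ref{prop:system} counts for $\dim\calH^G_{p,q}$ and $\dim\calH^G_{q,p}$, using the involution that swaps $\alpha$ and $\beta$.

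Concretely, by Proposition~\ref{prop:system}, $\dim\calH^G_{p,q}$ equals the number of pairs $(\alpha,\beta)\in\N^n\times\N^n$ satisfying
\[
|\alpha|=p,\quad |\beta|=q,\quad \alpha_1=0\ \text{or}\ \beta_1=0,\quad \sum_{j=1}^n \ell_j(\alpha_j-\beta_j)\equiv 0\pmod{k},
\]
and similarly $\dim\calH^G_{q,p}$ is the number of pairs satisfying the same system with $p$ and $q$ interchanged. I would define $\Phi(\alpha,\beta)=(\beta,\alpha)$ and verify that it maps one solution set bijectively onto the other. The plan is to check each condition in turn: the size conditions $|\alpha|=p$, $|\beta|=q$ are simply swapped; the ``$\alpha_1=0$ or $\beta_1=0$'' condition is already symmetric in $\alpha$ and $\beta$; and the congruence $\sum_{j=1}^n \ell_j(\alpha_j-\beta_j)\equiv 0\pmod{k}$ is preserved because negating both sides gives an equivalent statement modulo $k$. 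Since $\Phi$ is its own inverse, it is a bijection.

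There is no real obstacle here: the entire content of the lemma is the observation that the Diophantine system defining $\dim\calH^G_{p,q}$ is symmetric under the exchange $\alpha\leftrightarrow\beta$ (which simultaneously exchanges $p\leftrightarrow q$). So the proof reduces to a one-line symmetry argument once Proposition~\ref{prop:system} is invoked, and this is the form I would write it in: state the involution, note that each of the three defining constraints is either symmetric or goes to its counterpart under the swap, and conclude.
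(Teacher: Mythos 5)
Your proof is correct and matches the paper's argument exactly: the paper also observes that $(\alpha,\beta)\leftrightarrow(\beta,\alpha)$ is a bijection between the solution sets of the system in Proposition~\ref{prop:system} for $\calH^G_{p,q}$ and $\calH^G_{q,p}$. Your write-up just spells out the verification of each constraint a bit more explicitly.
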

This follows from Proposition \ref{prop:system}, observing that $(\alpha,\beta)\leftrightarrow(\beta,\alpha)$ is a one-to-one correspondence between solutions to the system for $\calH^G_{p,q}$ and solutions to the system for $\calH^G_{q,p}$.

\begin{lemma}
\label{lem:doesnotdivide} 
If $d$ does not divide $p-q$, then $\dim\calH_{p,q}^G = 0$.
\end{lemma}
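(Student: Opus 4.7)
The plan is to apply Proposition \ref{prop:system} directly: $\dim\calH^G_{p,q}$ counts $n=2$ tuples $(\alpha,\beta)$ with $|\alpha|=p$, $|\beta|=q$, at least one of $\alpha_1,\beta_1$ zero, and
\[\ell_1(\alpha_1-\beta_1)+\ell_2(\alpha_2-\beta_2)\equiv 0\pmod k.\]
The idea is to show that the congruence alone already forces $d\mid p-q$, so that violating this divisibility leaves the solution set empty.

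Using $\alpha_2-\beta_2=(p-q)-(\alpha_1-\beta_1)$, I rewrite the congruence as
\[(\ell_1-\ell_2)(\alpha_1-\beta_1)+\ell_2(p-q)\equiv 0\pmod k.\]
Since $d\mid k$ and $d\mid \ell_1-\ell_2$, reducing modulo $d$ eliminates the first term and gives $\ell_2(p-q)\equiv 0\pmod d$. Because $\ell_2$ is coprime to $k$ and $d\mid k$, $\ell_2$ is also coprime to $d$, so $\ell_2$ is invertible modulo $d$. Hence $d\mid p-q$.

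Taking the contrapositive, if $d\nmid p-q$ then no $(\alpha,\beta)$ satisfies the congruence, so the basis count in Proposition \ref{prop:system} is zero and $\dim\calH^G_{p,q}=0$. The only step that needs any care is checking $\gcd(\ell_2,d)=1$, which follows immediately from $d\mid k$ and $\gcd(\ell_2,k)=1$ built into Definition \ref{def:lens_space}; there is no real obstacle here.
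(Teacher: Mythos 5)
Your proof is correct and takes essentially the same approach as the paper: substitute $\alpha_2=p-\alpha_1$, $\beta_2=q-\beta_1$ to get $(\ell_1-\ell_2)(\alpha_1-\beta_1)\equiv -\ell_2(p-q)\pmod k$, reduce modulo $d$ using $d\mid\ell_1-\ell_2$ and $d\mid k$, and invoke $\gcd(\ell_2,d)=1$. The only cosmetic difference is that you argue directly that any solution forces $d\mid p-q$, whereas the paper phrases it as a proof by contradiction.
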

\begin{proof} From Proposition~\ref{prop:system}, we know $\dim\calH_{p,q}^G$ is the number of solutions to \[\ell_1(\alpha_1 - \beta_1) + \ell_2(\alpha_2 - \beta_2) \equiv 0\mod k\]
such that $\alpha_1 \beta_1 = 0$, and $\alpha_1 + \alpha_2 = p$, $\beta_1+\beta_2 =q$. Suppose for contradiction $d$ does not divide $p-q$, but a solution exists. We then have
\begin{align*}
\ell_1\left(\alpha_1 - \beta_1\right) + \ell_2\left(\left(p - \alpha_1\right) - \left(q-\beta_1\right)\right) &\equiv 0 \mod k\\
\left(\ell_1 - \ell_2\right)\left(\alpha_1 - \beta_1\right)  &\equiv - \ell_2\left(p-q\right) \mod k.
\end{align*}
Note that $d$ is coprime to $\ell_2$, since $\ell_2$ is coprime to $k$ and $d$ divides $k$. Since $d$ does not divide $p-q$, we see that $d$ does not divide the right hand side of the equation. However, $d$ divides $\ell_1 - \ell_2$, which is a contradiction, meaning no such solution exists.
\end{proof}
We now relate the dimensions of invariant subpaces.

\begin{lemma}
If $d$ divides $p-q$, then \[\dim \calH^G_{p+k,q} =\dim \calH^G_{p,q+k} = d+ \dim\calH^G_{p,q}. \]
\end{lemma}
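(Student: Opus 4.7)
The plan is to use Proposition \ref{prop:system} with $n=2$ to translate each of $\dim\calH^G_{p,q}$, $\dim\calH^G_{p+k,q}$, and $\dim\calH^G_{p,q+k}$ into an honest count of lattice points satisfying a single linear congruence, and then show directly that increasing $p$ or $q$ by $k$ enlarges this count by exactly $d$.

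First I would eliminate $\alpha_2,\beta_2$ via $\alpha_2=p-\alpha_1$, $\beta_2=q-\beta_1$. The congruence in Proposition \ref{prop:system} becomes
\[(\ell_1-\ell_2)(\alpha_1-\beta_1)\equiv \ell_2(q-p)\pmod k.\]
Writing $k=dk'$ and $\ell_1-\ell_2=dm'$ with $\gcd(k',m')=1$, and using that $\gcd(d,\ell_2)=1$ (because $\ell_2$ is coprime to $k$), the hypothesis $d\mid p-q$ makes this congruence solvable and equivalent to
\[\alpha_1-\beta_1\equiv c_{p,q}\pmod{k'},\qquad c_{p,q}:=(m')^{-1}\ell_2(q-p)/d\pmod{k'}.\]
The first key observation is that the residue $c_{p,q}$ is unchanged when $p$ is replaced by $p+k$ or $q$ by $q+k$, since $(q-(p+k))/d = (q-p)/d - k'$ and similarly for $q+k$. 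So the congruence class for $\alpha_1-\beta_1$ is identical in all three counts.

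Next I would count pairs $(\alpha_1,\beta_1)$ with $\alpha_1\in[0,p]$, $\beta_1\in[0,q]$, $\alpha_1\beta_1=0$, and $\alpha_1-\beta_1\equiv c_{p,q}\pmod{k'}$ by inclusion--exclusion on the clauses $\alpha_1=0$ and $\beta_1=0$. Define
\[g(p,c)=\#\{\alpha_1\in[0,p]:\alpha_1\equiv c\pmod{k'}\},\qquad f(q,c)=\#\{\beta_1\in[0,q]:-\beta_1\equiv c\pmod{k'}\}.\]
Then
\[\dim\calH^G_{p,q}=g(p,c_{p,q})+f(q,c_{p,q})-\varepsilon,\]
where $\varepsilon=1$ if $c_{p,q}\equiv 0\pmod{k'}$ (so $\alpha_1=\beta_1=0$ is a valid solution counted twice) and $\varepsilon=0$ otherwise. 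The correction $\varepsilon$ also does not change when $p$ or $q$ shifts by $k$, since $c_{p,q}$ does not.

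The last step is a one-line arithmetic observation: extending the interval $[0,p]$ to $[0,p+k]=[0,p+dk']$ adds exactly $d$ integers from any fixed residue class modulo $k'$, so $g(p+k,c)=g(p,c)+d$ and similarly $f(q+k,c)=f(q,c)+d$. Combining this with the identity above gives
\[\dim\calH^G_{p+k,q}=\dim\calH^G_{p,q}+d=\dim\calH^G_{p,q+k}.\]
The only real subtlety is the bookkeeping around $\varepsilon$ and the independence of $c_{p,q}$ on $k$-shifts; once those are in hand, the statement follows by elementary counting without any deeper input.
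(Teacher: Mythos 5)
Your proof is correct and takes essentially the same route as the paper: both reduce to the congruence from Proposition \ref{prop:system}, apply inclusion--exclusion on the clauses $\alpha_1=0$ and $\beta_1=0$, and finish by observing that a length-$k$ extension of the interval for $\alpha_1$ or $\beta_1$ adds exactly $d$ new solutions of the relevant residue condition modulo $k'=k/d$. Your bookkeeping via the invariant residue $c_{p,q}$ and the functions $g,f$ is a slightly more symmetric packaging of the paper's $m_{p,q},n_{p,q}$ argument, but it is the same idea.
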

\begin{proof} From Proposition~\ref{prop:system}, $\dim\calH_{p,q}^G$ is the number of solutions to 
\[\ell_1\left(\alpha_1 - \beta_1\right) + \ell_2\left(\alpha_2 - \beta_2\right) \equiv 0\mod k\]
such that $\alpha_1 + \alpha_2 = p$, $\beta_1+\beta_2 =q$, and either $\alpha_1 = 0$ or $\beta_1 = 0$. In the $\alpha_1 = 0$ case, this reduces to 
\begin{equation}
\label{eq:alpha}
\ell_1\left(-\beta_1\right) + \ell_2\left(p - q +\beta_1\right) \equiv 0\mod k\quad\text{ where } 0\leq \beta_1 \leq q.
\end{equation}
Similarly, in the $\beta_1 = 0$ case, we obtain
\begin{equation}
\label{eq:beta}
\ell_1\left(\alpha_1\right) + \ell_2\left(p - q -\alpha_1\right) \equiv 0\mod k\quad\text{ where } 0\leq \alpha_1 \leq p.
\end{equation}
By inclusion-exclusion, $\dim\calH_{p,q}^G$ is the number of solutions to (\ref{eq:alpha}) plus the number of solutions to (\ref{eq:beta}) minus the number of solutions where $\alpha_1 = \beta_1 = 0$. Note that in the $\alpha_1 = \beta_1 = 0$ case, the Diophantine system reduces to $\ell_1\left(p-q\right) \equiv 0\mod k$, which has one solution if $k$ divides $p-q$, and zero solutions otherwise. For ease of notation, define the following indicator function for divisibility
\[\divides\left(k, a\right) = \begin{cases}
1 & k \text{ divides } a\\
0 & \text{otherwise}
\end{cases}.\]

Let $m_{p,q}$ denote the number of solutions to (\ref{eq:alpha}) and $n_{p,q}$ denote the number of solutions to (\ref{eq:beta}). Using this notation, the above inclusion-exclusion argument can be written as \[\dim\calH_{p,q}^G = m_{p,q} + n_{p,q} - \divides\left(k, p-q\right).\]

Note that $m_{p+k,q}$ is the number of solutions to $\ell_1\left(-\beta_1\right) + \ell_2\left(p+k-q+\beta_1\right) \equiv 0\mod k$ for $0\leq \beta_1\leq q$. This equation is equivalent to $\ell_1\left(-\beta_1\right) + \ell_2\left(p-q+\beta_1\right) \equiv 0\mod k$, which implies $m_{p+k,q} = m_{p,q}$. By the same logic, we can see that $n_{p,q+k} = n_{p,q}$. 

We now show $m_{p,q+k} = m_{p,q}+d$ and $n_{p+k,q} = n_{p,q} + d$. Note that $m_{p,q+k} - m_{p,q}$ is the number of solutions to $\ell_1\left(-\beta_1\right) + \ell_2\left(p-q+\beta_1\right) \equiv 0\mod k$ where $q+1 \leq \beta_1\leq q+k$. Reducing modulo $k$, this is equal to the number of solutions where $1\leq \beta_1\leq k$. The equivalence may be rewritten as 
\[\left(\ell_1 - \ell_2\right)\beta_1 \equiv \ell_2\left(p-q\right)\mod k, \] where $1 \leq \beta_1 \leq k$.
Since $d$ divides $\ell_1 - \ell_2$ and $p-q$, we obtain 
\[\left(\frac{\ell_1-\ell_2}{d}\right)\beta_1 \equiv \ell_2\left(\frac{p-q}{d}\right)\mod{k/d}.\]
Since $d = \gcd(k,\ell_1-\ell_2)$, the above equivalence has a unique solution for $1\leq \beta_1\leq k/d$. Thus, there are precisely $d$ solutions in $1\leq \beta_1 \leq k$. Hence, $m_{p,q+k} = m_{p,q} + d$. Similarly, we see that $n_{p+k,q} = n_{p,q} + d$.

Combining these facts, we have 
\[
\dim\calH_{p+k,q}^G = m_{p+k,q} + n_{p+k,q} - \divides\left(k, p-q\right)
= m_{p,q} + n_{p,q} + d - \divides\left(k, p-q\right)
= \dim\calH_{p,q}^G +d\]
and
\[
\dim\calH_{p,q+k}^G = m_{p,q+k} + n_{p,q+k} - \divides\left(k, p-q\right)= m_{p,q}+d + n_{p,q} - 
\divides\left(k, p-q\right)= \dim\calH_{p,q}^G +d\]
completing the proof. 
\end{proof}

In what follows, it will be useful to distinguish between ``mod" as an equivalence relation and ``mod" as a binary operator. To that end, we will use continue to use ``$\mod k$" to denote the equivalence relation modulo $k$, and we will use ``$\%$" to denote the binary modulo operator. That is to say, $p\% k$ denotes the smallest non-negative integer equivalent to $p$ modulo $k$. For example, $11\%5 = 1$, $12\%4 = 0$, and $1\%7 = 1$. Using this notation, the previous lemma implies

\begin{lemma}[Dimension Lemma]\label{Lem:dimension}
If $d$ divides $p-q$, then 
\[\dim \calH^G_{p,q} = \dim\calH^G_{p\% k,q\% k} + d\left(\left\lfloor\frac{p}{k}\right\rfloor + \left\lfloor\frac{q}{k}\right\rfloor\right). \]
\end{lemma}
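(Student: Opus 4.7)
The plan is to apply the previous lemma iteratively to reduce $p$ and $q$ modulo $k$, keeping careful track of the hypothesis $d \mid (p-q)$ as we go. The key observation making this work is that $d \mid k$, since $d = \gcd(k, \ell_1 - \ell_2)$, so reducing either argument by a multiple of $k$ does not disturb the divisibility condition needed to invoke the previous lemma.

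Concretely, write $p = ak + p'$ and $q = bk + q'$ where $a = \lfloor p/k \rfloor$, $b = \lfloor q/k \rfloor$, $p' = p\%k$, and $q' = q\%k$. Since $d \mid k$ and $d \mid (p-q)$, we also have $d \mid (p'-q')$, and more generally $d \mid ((p - jk) - q)$ and $d \mid (p' - (q - jk))$ for every nonnegative integer $j$. I would then induct on $a + b$. The base case $a = b = 0$ is immediate. For the inductive step, if $a \geq 1$ I apply the previous lemma with $(p,q)$ replaced by $(p-k, q)$ to obtain
\[
\dim \calH^G_{p,q} = d + \dim \calH^G_{p-k, q},
\]
and the inductive hypothesis (valid because $d \mid ((p-k) - q)$) gives
\[
\dim \calH^G_{p-k,q} = \dim \calH^G_{p'\!, q'} + d\!\left(\tfrac{p-k}{k} \text{ rounded down} + b\right) = \dim \calH^G_{p'\!, q'} + d(a-1+b),
\]
which combines to the desired equality. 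The case $a = 0$, $b \geq 1$ is symmetric, using the $q \mapsto q+k$ recursion from the previous lemma in reverse.

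The main obstacle is really just bookkeeping: one must verify that the hypothesis $d \mid (p-q)$ of the previous lemma holds at every intermediate step of the induction, which reduces to the single fact $d \mid k$. Once this is in hand, the lemma follows directly from $a + b$ applications of the previous lemma and no further combinatorial input is required.
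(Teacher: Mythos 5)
Your proof is correct and matches the paper's approach: the paper simply asserts that the Dimension Lemma follows from the previous lemma by iterated application, and your induction on $a+b$ together with the observation that $d\mid k$ preserves the divisibility hypothesis is exactly the bookkeeping the paper leaves implicit.
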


Now, let $L(k;\ell_1',\ell_2')$ be another lens space generated by the group $G'$, and let $d' = \gcd(k,\ell_1' - \ell_2')$.

\begin{proposition}\label{Propd=d'} Consider any two lens spaces $L\left(k;\ell_1,\ell_2\right)$ and $L\left(k;\ell_1',\ell_2'\right)$ where $d = 2$ and $d' = 4$ do not occur simultaneously. If the lens spaces are CR isospectral, then $d = d'$.
\label{prop:gcd}
\end{proposition}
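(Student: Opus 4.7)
The strategy is to prove the contrapositive: assuming $d\ne d'$, exhibit an eigenvalue whose multiplicity in $L=L(k;\ell_1,\ell_2)$ differs from its multiplicity in $L'=L(k;\ell_1',\ell_2')$. The main tools are Proposition \ref{prop:system}, which reduces $\dim\calH^G_{p,q}$ to a count of Diophantine solutions, and Lemma \ref{lem:doesnotdivide}, which forces $\dim\calH^G_{p,q}=0$ whenever $d\nmid p-q$.

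First I would compute two key dimensions directly from Proposition \ref{prop:system}. Taking $\alpha=(0,0)$ and $\beta=(j,k-j)$, the congruence reduces to $(\ell_1-\ell_2)j\equiv 0\mod k$, whose solution count in $\{0,\ldots,k\}$ is $d+1$; hence $\dim\calH^G_{0,k}=d+1$. For $\dim\calH^G_{k-1,1}$, splitting the system according to whether $\beta=(1,0)$ or $\beta=(0,1)$ yields
\[\dim\calH^G_{k-1,1}=\lrb{k\mid\ell_1+\ell_2}+N,\]
where $N$ is the number of $\alpha_1\in\{0,\ldots,k-1\}$ solving $(\ell_1-\ell_2)\alpha_1\equiv 2\ell_2\mod k$. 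Since $\gcd(d,\ell_2)=1$, $N$ equals $d$ when $d\mid 2$ and $0$ otherwise.

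Under the standing hypothesis of Section 4 that $k$ is an odd prime, every divisor of $k$ lies in $\{1,k\}$, so $d\ne d'$ reduces (after swapping) to $d=1$ and $d'=k$. Since $k$ is prime, the only factorizations $q(p+1)=k$ are $(p,q)=(k-1,1)$ and $(p,q)=(0,k)$, so the multiplicity of $\lambda=2k$ equals $\dim\calH^G_{0,k}+\dim\calH^G_{k-1,1}$. Plugging in the formulas above: on the $d=k$ side, Lemma \ref{lem:doesnotdivide} forces $\dim\calH^G_{k-1,1}=0$ (since $k\nmid k-2$ for $k$ odd prime), giving multiplicity $k+1$; on the $d=1$ side, the multiplicity lies in $\{3,4\}$. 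These values disagree whenever $k\ge 5$, contradicting isospectrality.

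The main obstacle is the borderline case $k=3$, where $k+1=4$ can coincide with the possible value $4$ on the $d=1$ side (precisely when $3\mid\ell_1+\ell_2$). I would resolve this by comparing multiplicities of $\lambda=4$, which receives contributions only from $(p,q)\in\{(0,2),(1,1)\}$. Direct application of Proposition \ref{prop:system} yields $\dim\calH^G_{1,1}=3$ and $\dim\calH^G_{0,2}=0$ for the unique $d=3$ lens space $L(3;1,1)$ (up to isomorphism), so its multiplicity is $3$; on the other hand, for any $L(3;\ell_1,\ell_2)$ with $d=1$ one computes $\dim\calH^G_{1,1}=\dim\calH^G_{0,2}=1$, giving multiplicity $2$. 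The mismatch completes the argument. The excluded configuration $(d,d')=(2,4)$ in the hypothesis is vacuous in our setting because $d,d'\in\{1,k\}$ and $k\notin\{2,4\}$; it records the one case in the more general (non-prime-$k$) analogue of this argument where the eigenvalue $2k$ comparison and the analogous small-eigenvalue comparison both fail to separate $d$ from $d'$.
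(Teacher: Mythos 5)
Your argument is mathematically correct, but only in the odd-prime case, which is strictly less than what the proposition asserts. The hypothesis ``$d=2$ and $d'=4$ do not occur simultaneously'' is there precisely because the proposition is stated and proved for \emph{general} $k$: for prime $k$ that exclusion is vacuous (as you observe yourself). The paper's proof covers all $k$ via a growth-rate argument in three cases ($2<d<d'$; $2=d<d'$ with $d'\neq4$; $1=d<d'$): pick a prime $r$ with $r\equiv 1\pmod{dd'}$, set $\lambda=4r$ (or $2r$), and use the Dimension Lemma to show the multiplicity of $\lambda$ grows linearly in $r$ with slope proportional to $d$ on one side and $d'$ on the other; different slopes force the multiplicities apart for large $r$. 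Your proof never invokes the Dimension Lemma or any asymptotic argument and, because you specialize to $d,d'\in\{1,k\}$, never touches the composite-$k$ cases the paper's three-case split handles. That is the gap: you prove a consequence of the proposition that happens to suffice for Theorem~\ref{thm:isospec}, but not the proposition itself.

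Within the prime setting, your route is genuinely different and a bit cleaner than the paper's. Where the paper lets $r\to\infty$ and compares slopes, you make a single finite comparison: $\dim\calH^G_{0,k}=d+1$ and the explicit formula for $\dim\calH^G_{k-1,1}$ (both computed directly from Proposition~\ref{prop:system}) give multiplicity $k+1$ at $\lambda=2k$ when $d=k$, versus $3$ or $4$ when $d=1$, which separates all primes $k\ge5$ at once; and the $k=3$ borderline is dispatched by the even smaller eigenvalue $\lambda=4$. All of these computations check out, including the use of Lemma~\ref{lem:doesnotdivide} to kill $\dim\calH^{G'}_{k-1,1}$ when $d'=k$ (since $k\nmid k-2$). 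If you want a full proof of the proposition as stated, you would need to extend this finite-comparison idea to composite $k$, or fall back on the paper's asymptotic argument for the cases $1<d<d'<k$; as your own remark about $(d,d')=(2,4)$ hints, the finite comparisons at $\lambda=2k$ and at the small eigenvalue can both fail there, which is exactly why the paper resorts to growth rates and why the exceptional case appears in the hypothesis.
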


\begin{remark}
We suspect that the above statement for lens spaces with $d = 2$ and $d' = 4$ is also true. This case seems to require a more subtle analysis than the growth rate arguments considered below. However, since in the rest of this section we assume $k$ is prime, this restriction on $d$ and $d'$ does not impact the rest of the paper. 
\end{remark}
\begin{proof} We show that if $d \neq d'$, then the spectra differ.

{\bf Case 1:} Assume $2 < d < d'$. Let $r$ be a prime to be determined such that $r \equiv 1 \mod d d'$. Let $\lambda = 4r$. It follows that the only $\left(p,q\right)$ so that $ 2 q \left( p + 1 \right) = \lambda$ are $\left(2r-1,1\right)$, $\left(r-1,2\right)$, $\left(1,r\right)$, and $\left(0,2r\right)$. Since $r\equiv 1 \mod d$ and $r\equiv 1 \mod d'$, by Lemma \ref{lem:doesnotdivide}
\begin{align*}
\operatorname{mult}_G \left(\lambda\right) 
&= \dim \mathcal{H}^G_{2r - 1,1} + \dim \mathcal{H}^G_{1,r}\\
\operatorname{mult}_{G'} \left(\lambda\right)
&= \dim \mathcal{H}^{G'}_{2r - 1,1} + \dim \mathcal{H}^{G'}_{1,r}.
\end{align*}
By the dimension lemma,
\begin{align*}
\operatorname{mult}_G \left(\lambda\right) 
&= \dim \mathcal{H}^G_{2r - 1\% k,1} + \dim \mathcal{H}^G_{1,r\% k} + d \left(\left\lfloor \frac{2r - 1}{k}\right\rfloor + \left\lfloor \frac{r}{k}\right\rfloor\right)\\
\operatorname{mult}_{G'} \left(\lambda\right)
&= \dim \mathcal{H}^{G'}_{2r - 1\% k,1} + \dim \mathcal{H}^{G'}_{1,r\% k}+ d' \left(\left\lfloor \frac{2r - 1}{k}\right\rfloor + \left\lfloor \frac{r}{k}\right\rfloor\right).
\end{align*}
Since $d < d'$, the multiplicity will differ for sufficiently large $r$.

{\bf Case 2:} Assume $2 = d < d'$ where $d' \neq 4$. Let $r$ be a prime to be determined such that $r \equiv 1 \mod d'$. Let $\lambda = 4r$. By Lemma \ref{lem:doesnotdivide}, we see that,
\begin{align*}
\operatorname{mult}_G \left(\lambda\right) 
&= \dim \mathcal{H}_{1,r}^G + \dim \mathcal{H}_{2r-1,1}^G + \dim \mathcal{H}_{0,2r}^G + \dim \mathcal{H}_{r-1,2}^G\\
\operatorname{mult}_{G'} \left(\lambda\right)
&= \dim \mathcal{H}_{1,r}^{G'} + \dim \mathcal{H}_{2r-1,1}^{G'}.
\end{align*}
By the dimension lemma,
\begin{align*}
\operatorname{mult}_{G} \left(\lambda\right) 
&= \dim \mathcal{H}_{1,r\% k}^G + \dim \mathcal{H}_{2r-1 \% k, 1}^G + \dim \mathcal{H}_{0,2r\% k}^G + \dim\mathcal{H}_{r - 1 \% k, 2}^G + 2 \left(\left\lfloor \frac{r}{k}\right\rfloor + \left\lfloor \frac{2r - 1}{k}\right\rfloor + \left\lfloor \frac{2r}{k}\right\rfloor + \left\lfloor \frac{r - 1}{k}\right\rfloor\right)\\
\operatorname{mult}_{G'} \left(\lambda\right)
&= \dim \mathcal{H}_{1,r\% k}^{G'} + \dim \mathcal{H}_{2r-1 \% k, 1}^{G'} + d' \left(\left\lfloor \frac{r}{k}\right\rfloor  + \left\lfloor \frac{2r - 1}{k}\right\rfloor\right).
\end{align*}
This implies $\operatorname{mult}_G \left(\lambda\right)$ has growth rate $\frac{12}{k}r$ while $\operatorname{mult}_{G'} \left(\lambda\right)$ has growth rate $\frac{3d'}{k}r$. Since the growth rates are different, for large enough $r$ the multiplicities will differ. 

{\bf Case 3:} Assume $1 = d < d'$. Let $r$ be any prime greater than $k$ so that $r\equiv 1 \mod d'$. Let $\lambda = 2r$. It follows that the only $\left(p,q\right)$ so that $2q \left(p+1\right) = \lambda$ are $\left(r-1,1\right)$ and $\left(0,r\right)$. Since $d'$ does not divide $r-2$ or $r$, $\calH_{r-1,1}^{G'}$ and $\calH_{0,r}^{G'}$ are both empty by Lemma \ref{lem:doesnotdivide}. This implies $\operatorname{mult}_{G'}\left(\lambda\right) = 0$. Since $r > k$, by the dimension lemma,
\[\operatorname{mult}_G\left(\lambda\right) = \dim\calH_{r-1,1}^{G} + \dim\calH_{0,r}^{G} = \dim\calH_{r-1\% k,1}^{G} + \dim\calH_{0,r\% k}^{G} + \lrfl{\frac {r-1}k} +\lrfl{\frac r k}> 0,\]
which implies the multiplicities differ. 
\end{proof}
Recall that in this section, (2) implies (3), we wish to show that if we have two CR isospectral lens spaces given by groups $G$ and $G'$, then $\dim \mathcal{H}_{p,q}^G = \dim \mathcal{H}_{p,q}^{G'}$ for all $p,q\geq 0$.

For each $p, q \geq 0$, define \[x_{p,q} = \dim \calH^G_{p,q}-\dim \calH^{G'}_{p,q}.\]
By the dimension lemma and Proposition \ref{prop:gcd}, we have
\[
x_{p,q}  =\dim\calH^G_{p\% k,q\% k} + d\left(\lrfl{\frac{p}{k}} + \lrfl{\frac{q}{k}}\right)-\left(\dim\calH^{G'}_{p\% k,q\% k} + d\left(\lrfl{\frac{p}{k}} + \lrfl{\frac{q}{k}}\right)\right)= x_{p\% k,q\% k}.\]
Thus, it suffices to show $x_{p,q} = 0$ for $0\leq p,q\leq k-1$. Define
\[X = \begin{pmatrix} 
x_{0,0} & x_{0,1} & \cdots & x_{0,k-1} \\
x_{1,0} & x_{1,1} & \cdots & x_{1,k-1} \\
\vdots &\vdots & \ddots & \vdots \\
x_{k-1,0} & x_{k-1,1} & \cdots & x_{k-1,k-1}\\
\end{pmatrix}.\]
By our observation above, $X = 0$ implies $\dim \calH^G_{p,q} = \dim \calH^{G'}_{p,q}$ for all $p,q\geq 0$. From Lemma \ref{lem:symmetric}, we see that $X$ is symmetric.

Assuming our two lens spaces are CR isospectral, for every $\lambda \in 2\N$, we have $$\sum_{2q(p+1) = \lambda}\dim \calH^G_{p,q} = \sum_{2q(p+1) = \lambda}\dim \calH^{G'}_{p,q}.$$ This implies that
\begin{align*}
0=\sum_{2q(p+1) = \lambda}\left(\dim \calH^G_{p,q} -\dim \calH^{G'}_{p,q}\right) 
=\sum_{2q(p+1) = \lambda}x_{p,q}
=\sum_{2q(p+1) = \lambda}x_{p\% k,q\% k}.
\end{align*}
For integers $a,b$ so that $0\leq a,b\leq k - 1$, define
\[c^\lambda_{a,b} = \#\left\{(p,q)\in \mathbb{Z}_{\geq 0}\times \mathbb{Z}_{\geq 0} :
p \equiv a\mod k,\,
q \equiv b\mod k,\,  
2q \left(p+1\right) = \lambda\right\}.\] 
Note that $c^\lambda_{a,b}$ is the number of times $x_{a,b}$ appears in the above sum. Therefore, we have \[\sum_{0 \leq a,b \leq k-1} c^\lambda_{a,b} x_{a,b} = 0. \]
For each $\lambda$, define the $k \times k$ matrix $C^{\lambda}$:
\[C^\lambda = \begin{pmatrix} 
c_{0,0}^\lambda & c_{0,1}^\lambda & \cdots & c_{0,k-1}^\lambda \\
c_{1,0}^\lambda & c_{1,1}^\lambda & \cdots & c_{1,k-1}^\lambda \\
\vdots & \vdots & \ddots & \vdots \\
c_{k-1,0}^\lambda & c_{k-1,1}^\lambda & \cdots & c_{k-1,k-1}^\lambda\\
\end{pmatrix}.\] 
Then, we may write the above equations more compactly as \[C^\lambda \cdot X = 0\text{ for all $\lambda \in 2\N$},\]
where $\cdot$ denotes the Frobenius inner product. We will show that the only symmetric matrix $X$ that satisfies all of these equations is the zero matrix.


\begin{definition} Define the operator $T$ on $k\times k$ matrices, which moves the top row to the bottom and shifts every other row up by one:
\[T: \begin{pmatrix}
a_{0,0} & a_{0,1} & \cdots & a_{0,k-1} \\
a_{1,0} & a_{1,1} & \cdots & a_{1,k-1} \\
\vdots &\vdots & \ddots & \vdots\\
a_{k-1,0} & a_{k-1,1} & \cdots & a_{k-1,k-1} \\
\end{pmatrix} \mapsto 
\begin{pmatrix}
a_{1,0} & a_{1,1} & \cdots & a_{1,k-1} \\
a_{2,0} & a_{2,1} & \cdots & a_{1,k-1} \\
\vdots &\vdots & \ddots & \vdots\\
a_{k-1,0} & a_{k-1,1} & \cdots & a_{k-1,k-1} \\
a_{0,0} & a_{0,1} & \cdots & a_{0,k-1} \\
\end{pmatrix}. \]
\end{definition}
Note that $T^{-1}$ moves the bottom row to the top and shifts every other row down by one:
\[T^{-1}: \begin{pmatrix}
a_{0,0} & a_{0,1} & \cdots & a_{0,k-1} \\
a_{1,0} & a_{1,1} & \cdots & a_{1,k-1} \\
\vdots &\vdots & \ddots & \vdots\\
a_{k-1,0} & a_{k-1,1} & \cdots & a_{k-1,k-1} \\
\end{pmatrix} \mapsto 
\begin{pmatrix}
a_{k-1,0} & a_{k-1,1} & \cdots & a_{k-1,k-1} \\
a_{0,0} & a_{0,1} & \cdots & a_{0,k-1} \\
a_{1,0} & a_{1,1} & \cdots & a_{1,k-1} \\
\vdots &\vdots & \ddots & \vdots\\
a_{k-2,0} & a_{k-2,1} & \cdots & a_{k-2,k-1} \\
\end{pmatrix}. \]

We now use $T$ to prove the following theorem about $\spn_\R\left\{C^\lambda\right\}_{\lambda\in2\N}$. 
\begin{theorem}
Let $k$ be a prime number. It follows that,
\[\spn_\R\left\{C^\lambda\right\}_{\lambda\in2\N} = T\left(\Sym_k\right)\]
where $\Sym_k$ denotes the space of real $k\times k$ symmetric matrices.
\end{theorem}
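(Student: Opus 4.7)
The plan is to pull the equality back through $T$: writing $M_n := T^{-1}(C^{2n})$ for $n \geq 1$, it suffices to show that each $M_n$ lies in $\Sym_k$ and that the collection $\{M_n\}_{n \geq 1}$ spans $\Sym_k$. Unwinding the definitions of $T^{-1}$ and $C^{\lambda}$ and substituting $u = p + 1$, $v = q$ (so $u, v \geq 1$, using $p \geq 0$ and $2q(p + 1) = 2n > 0$), one computes
\[
(M_n)_{i,j} \ = \ \#\left\{ (u, v) \in \Z_{\geq 1}^{2} : uv = n,\ u \equiv i \mod k,\ v \equiv j \mod k \right\},
\]
so $(M_n)_{i,j}$ counts the ordered divisor pairs of $n$ with prescribed residues modulo $k$. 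The inclusion $\spn_\R \{C^{\lambda}\}_{\lambda \in 2\N} \subseteq T(\Sym_k)$ then follows immediately from the divisor swap $(u, v) \leftrightarrow (v, u)$, which forces $(M_n)_{i,j} = (M_n)_{j,i}$.

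For the reverse inclusion, I would realize each element of the standard basis of $\Sym_k$---namely the matrices $E_{i, i}$ for $0 \leq i \leq k - 1$ together with $E_{i, j} + E_{j, i}$ for $0 \leq i < j \leq k - 1$---as an explicit $\R$-linear combination of the $M_n$'s. The key input is Dirichlet's theorem on primes in arithmetic progressions: since $k$ is prime, every residue $a \in \{1, \ldots, k - 1\}$ is coprime to $k$, so one may choose pairwise distinct primes $p_a > k$ with $p_a \equiv a \mod k$. A direct enumeration of divisor pairs then gives $M_{p_1} = 2 E_{1, 1}$, $M_{p_a} = E_{1, a} + E_{a, 1}$ for $a \geq 2$, $M_k = E_{1, 0} + E_{0, 1}$, $M_{k^2} = M_k + E_{0, 0}$, and $M_{k p_a} = M_k + E_{0, a} + E_{a, 0}$ for $a \geq 2$. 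For a prime $p \equiv i \mod k$ one finds $M_{p^2} = (E_{1, b} + E_{b, 1}) + E_{i, i}$ with $b = i^2 \% k$, and for distinct primes $p \equiv i$, $p' \equiv j \mod k$ with $i \neq j$ one finds $M_{p p'} = (E_{1, b} + E_{b, 1}) + (E_{i, j} + E_{j, i})$ with $b = i j \% k$. Since $E_{1, b} + E_{b, 1}$ equals $M_{p_b}$ when $b \neq 1$ and equals $M_{p_1}$ when $b = 1$, these relations express every basis element of $\Sym_k$ as an $\R$-linear combination of $M_n$'s, completing the proof.

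The main obstacle I anticipate is tracking the degenerate case $b = 1$, in which the residues $(1, b)$ and $(b, 1)$ coincide and the term $E_{1, b} + E_{b, 1}$ has to be rewritten as $M_{p_1} = 2 E_{1, 1}$ in place of $M_{p_b}$; everything else is routine divisor bookkeeping. The crucial role of primality of $k$ is exactly this: it makes every nonzero residue class coprime to $k$, ensuring that Dirichlet's theorem produces a prime $p_a$ in every class $a \in \{1, \ldots, k - 1\}$, which is what allows the explicit construction above to reach every off-diagonal entry.
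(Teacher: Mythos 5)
Your proof is correct and follows essentially the same strategy as the paper's: pull back through $T^{-1}$, deduce symmetry of $T^{-1}(C^{\lambda})$ from the natural involution on the index set, and use Dirichlet's theorem on primes in arithmetic progressions to realize the standard basis of $\Sym_k$ from eigenvalues of the form $2k$, $2k^2$, $2kr$, $2rs$, and $2t$ for suitable primes. Your reparametrization $u=p+1$, $v=q$, turning $(M_n)_{i,j}$ into a count of ordered divisor pairs and the paper's bijection $(p,q)\mapsto(q-1,p+1)$ into the divisor swap $(u,v)\leftrightarrow(v,u)$, is a cleaner bookkeeping of the same computation, and your use of $C^{2p^2}$ for the diagonal entries (rather than two distinct primes in the same residue class) together with your explicit handling of the degenerate case $b=1$ are minor but genuine tidyings of the paper's argument.
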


\begin{proof} 
We first show $\spn_\R\left\{C^\lambda\right\}_{\lambda\in2\N} \subseteq T\left(\Sym_k\right)$. Fix $\lambda\in 2\mathbb{N}$ and let 
\[S^\lambda_{a,b} = \left\{\left(p,q\right)\in \mathbb{Z}_{\geq 0} \times \mathbb{Z}_{\geq 0}:
p \equiv a \mod{k},\,
q \equiv b \mod{k},\,
2q \left(p+1\right) = \lambda\right\}.\]
Note that $c^\lambda_{a,b} = \#S^\lambda_{a,b}$. The map \[(p,q) \mapsto (q-1,p+1)\]
is a bijection from $S^\lambda_{a,b}$ to $S^{\lambda}_{b-1,a+1}$. Therefore $c^\lambda_{a,b} = c^\lambda_{b-1,a+1}$ for all $a,b$. Note that \[T^{-1}\left(C^\lambda\right) = \begin{pmatrix}
c_{k-1,0}^{\lambda} & c_{k-1,1}^{\lambda} & c_{k-1,2}^{\lambda} & \cdots & c_{k-1,k-1}^{\lambda} \\
c_{0,0}^{\lambda} & c_{0,1}^{\lambda} & c_{0,2}^{\lambda} & \cdots & c_{0,k-1}^{\lambda} \\
c_{1,0}^{\lambda} & c_{1,1}^{\lambda} & c_{1,2}^{\lambda} &\cdots & c_{1,k-1}^{\lambda} \\
\vdots &  \vdots &\vdots & \ddots & \vdots \\
c_{k-2,0}^{\lambda} & c_{k-2,1}^{\lambda} & c_{k-2,2}^{\lambda} & \cdots & c_{k-2,k-1}^{\lambda} \\
\end{pmatrix}.\] 
Since $c^\lambda_{a,b} = c^\lambda_{b-1,a+1}$, the above matrix is symmetric. Hence, $T^{-1}\left(C^\lambda\right)$ is in $\Sym_k$ for each $\lambda$. This implies $C^\lambda$ is in $T\left(\Sym_k\right)$ for each $\lambda$.

We now show the other direction: $T\left(\Sym_k\right)\subseteq \spn_\R\left\{C^\lambda\right\}_{\lambda\in2\N}$. In particular, since $T$ is bijective, we show $\Sym_k\subseteq T^{-1}\left(\spn_\R\left\{C^\lambda\right\}_{\lambda\in2\N}\right)$. Let $E_{i,j}$ denote the $k\times k$ matrix whose $i,j$ entry is 1 and all other entries are 0. Note that $T^{-1}\left(E_{i,j}\right) = E_{\left(i+ 1\right) \% k , j}$ and the set 
\[\left\{E_{i,j} + E_{j,i} : 0\leq i \leq j \leq k-1\right\}\]
is a basis for $\Sym_k$. We show that each of these basis elements is in $T(\spn_\R\{C^\lambda\}_{\lambda\in2\N})$. Let $0\leq i\leq j\leq k-1$.

\textbf{Case 1:} Assume $0 = i = j $. Let $\lambda = 2k$. Since $k$ is prime, the only $\left(p,q\right)$ so that $2q\left(p+1\right) = \lambda$ are $\left(0,k\right)$ and $\left(k-1,1\right)$. This implies $C^{2k} = E_{0,0} + E_{k-1,1}$. Therefore, 
\begin{equation}
\label{eq:twok}
T^{-1}\left(C^{2k}\right) = E_{1,0} + E_{0,1}.
\end{equation}
Now let $\lambda = 2k^2$. Similarly, the only pairs $\left(p,q\right)$ so that $2q\left(p+1\right) = \lambda$ are $\left(0,k^2\right)$, $\left(k-1,k\right)$, and $\left(k^2-1,1\right)$. This implies $C^{2k} = E_{0,0} + E_{k-1,0} + E_{k-1,1}$. Therefore,
\begin{equation}
\label{eq:twoksquared}
T^{-1}\left(C^{2k^2}\right) = E_{1,0} + E_{0,0} + E_{0,1}.
\end{equation}
From (\ref{eq:twok}) and (\ref{eq:twoksquared}), it follows that \[
E_{0,0}  = T^{-1}\left(C^{2k^2}\right) - T^{-1}\left(C^{2k}\right).\]
Thus, the basis vector $E_{0,0} + E_{0,0}$ is in $T^{-1}\left(\spn_\R\{C^{\lambda}\}_{\lambda\in2\N}\right)$.

\textbf{Case 2:} Assume $0 = i < j$. Since $k$ is prime, $j$ is trivially coprime to $k$. By Dirichlet's theorem, there exists a prime $r$ so that $r \equiv j \mod k$. Let $\lambda = 2k r$. The only $\left(p,q\right)$ so that $2q\left(p+1\right) = \lambda$ are $\left(kr-1, 1\right)$, $\left(0, kr\right)$, $\left(k-1, r\right)$, and $\left(r-1, k\right)$. This implies,
\begin{equation}
\label{eq:twokr}
T^{-1}\left(C^{2kr}\right) = E_{0,1} + E_{1,0} + E_{0,j} + E_{j,0}.
\end{equation}
From (\ref{eq:twok}) and (\ref{eq:twokr}) \[E_{0,j} + E_{j,0} = T^{-1}\left(C^{2k r}\right) - T^{-1}\left(C^{2k}\right),\]
so $E_{0,j} + E_{j,0} \in T^{-1}\left(\spn_\R\{C^{\lambda}\}_{\lambda\in2\N}\right)$.

\textbf{Case 3:} Assume $0 < i \leq j$. Since $k$ is prime, $i$ and $j$ are coprime to $k$. Let $r$, $s$, and $t$ be primes such that $r \equiv j \mod k$, $s\equiv i \mod k$, and $t\equiv ij \mod k$. Let $\lambda = 2 rs$. Just as in case 2, we obtain the ordered pairs $\left(rs-1, 1\right)$, $\left(0, rs\right)$, $\left(s-1, r\right)$, and $\left(r-1, s\right)$, which implies
\begin{equation}
\label{eq:twors}
T^{-1}\left(C^{2rs}\right) = E_{ij\%k,1} + E_{1,ij\%k} + E_{i,j} + E_{j,i}.
\end{equation}

Finally, let $\lambda = 2t$. As in case 1, the only ordered pairs are $\left(t-1,1\right)$ and $\left(0,t\right)$, so 
\begin{equation}
\label{eq:twot}
T^{-1}\left(C^{2t}\right) = E_{ij\%k,1} + E_{1,ij\%k}.
\end{equation}

From (\ref{eq:twors}), and (\ref{eq:twot}), we see that \[E_{i,j} + E_{j,i} = T^{-1}\left(C^{2rs}\right) - T^{-1}\left(C^{2 t}\right),\]
so $E_{i,j} + E_{j,i} \in T^{-1}\left(\spn_\R\{C^{\lambda}\}_{\lambda\in2\N}\right)$.
\end{proof}

The following corollary is immediate when we regard the vector space of all real $k\times k$ matrices as an inner product space equipped with the Frobenius inner product, and when we notice that symmetric matrices and skew-symmetric matrices are orthogonal under this inner product and any $k\times k$ matrix is the sum of a symmetric matrix and skew-symmetric matrix.
\begin{corollary}
If a matrix $X$ solves the system 
\[C^\lambda \cdot X = 0\text{ for all }\lambda = 2,4,6,\ldots,\]
then $X\in T\left(\Sym_k\right)^\perp = T\left(\Skew_k\right)$, where $\Skew_k$ denotes the space of real $k\times k$ skew-symmetric (also known as antisymmetric) matrices.
\end{corollary}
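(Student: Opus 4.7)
The plan is to deduce the corollary from the preceding theorem purely by linear-algebraic bookkeeping with the Frobenius inner product. The hypothesis says that $X$ is Frobenius-orthogonal to every matrix $C^\lambda$ (for $\lambda \in 2\N$), hence orthogonal to the $\R$-linear span $\spn_\R\{C^\lambda\}_{\lambda\in 2\N}$. By the theorem just proved, this span equals $T(\Sym_k)$, so all we need is the identification $T(\Sym_k)^\perp = T(\Skew_k)$.

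For that identification, first recall that $M_k(\R) = \Sym_k \oplus \Skew_k$ is an orthogonal decomposition with respect to the Frobenius inner product $A\cdot B = \operatorname{tr}(A^\top B)$, because $\operatorname{tr}(S^\top K) = \operatorname{tr}(SK) = -\operatorname{tr}(KS) = -\operatorname{tr}(S^\top K)$ whenever $S$ is symmetric and $K$ skew-symmetric. Thus $\Sym_k^\perp = \Skew_k$.

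Next I would observe that the operator $T$ is nothing but left multiplication by the cyclic-shift permutation matrix $P$ with $P_{i,(i+1)\% k} = 1$ and zeros elsewhere: indeed, $T$ just cycles the rows. Since $P$ is a permutation matrix, $P^\top P = I$, so $T$ is an isometry of $(M_k(\R), \cdot)$:
\[
T(A)\cdot T(B) \;=\; \operatorname{tr}\bigl((PA)^\top (PB)\bigr) \;=\; \operatorname{tr}(A^\top P^\top P\, B) \;=\; A\cdot B.
\]
Isometries preserve orthogonal complements, so
\[
T(\Sym_k)^\perp \;=\; T\bigl(\Sym_k^\perp\bigr) \;=\; T(\Skew_k).
\]

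Putting everything together: the assumption $C^\lambda \cdot X = 0$ for every $\lambda \in 2\N$ forces $X \in \spn_\R\{C^\lambda\}_{\lambda\in 2\N}^\perp = T(\Sym_k)^\perp = T(\Skew_k)$, which is what the corollary asserts. There is no substantial obstacle here — the preceding theorem does all the heavy lifting, and the only thing to check is the elementary isometry property of $T$ above.
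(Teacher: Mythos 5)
Your proof is correct and follows the same route as the paper: regard $M_k(\R)$ as a Frobenius inner product space, use the orthogonal decomposition into $\Sym_k \oplus \Skew_k$, and invoke the preceding theorem identifying $\spn_\R\{C^\lambda\}$ with $T(\Sym_k)$. The one point you make explicit that the paper leaves implicit is that $T$ is a Frobenius isometry---being left multiplication by the cyclic permutation matrix $P$ with $P^\top P = I$---which is exactly what justifies the step $T(\Sym_k)^\perp = T(\Skew_k)$; this is a worthwhile clarification.
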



We now complete the proof of (2) implies (3). Assuming we have two CR isospectral lens spaces $L\left(k;\ell_1,\ell_2\right)$ and $L\left(k;\ell_1',\ell_2'\right)$ where $k$ is an odd prime, then there exists a symmetric matrix $X$ so that $C^\lambda\cdot X = 0$ for all $\lambda\in 2 \mathbb{N}$. By the above corollary, $X\in \Sym_k \cap T\left(\Skew_k\right)$. Equivalently, $T^{-1}\left(X\right)\in T^{-1}\left(\Sym_k\right) \cap \Skew_k$. Since $T^{-1}\left(X\right)\in\Skew_k$, we have 
\[T^{-1}\left(X\right) = - T^{-1}\left(X\right)^t.\] 
Applying $T$ to both sides yields, 
\[ X = - T\left(T^{-1}\left(X\right)^t\right).\]
Since $X$ is symmetric, transposing both sides imply
\[X = - T\left(T^{-1}\left(X\right)^t\right)^t.\]
Writing this equation in matrix form, we obtain
\[\begin{pmatrix}
x_{0,0} & x_{0,1} & x_{0,2}  & \cdots & x_{0,k-1} \\
x_{1,0} & x_{1,1} & x_{1,2}  & \cdots & x_{1,k-1} \\
x_{2,0} & x_{2,1} & x_{2,2}  & \cdots & x_{2,k-1} \\
\vdots &\vdots & \vdots & \ddots & \vdots\\
x_{k-1,0} & x_{k-1,1} & x_{k-1,2} & \cdots & x_{k-1,k-1} \\
\end{pmatrix} = 
\begin{pmatrix}
x_{k-1,1} & x_{k-1,2} & x_{k-1,3}  & \cdots & x_{k-1,k-1} & x_{k-1,0} \\
x_{0,1} & x_{0,2} & x_{0,3}  & \cdots & x_{1,k-1} & x_{1,0} \\
x_{1,1} & x_{1,2} & x_{1,3}  & \cdots & x_{1,k-1} & x_{2,0} \\
\vdots &\vdots & \vdots & \ddots & \vdots & \vdots\\
x_{k-2,1} & x_{k-2,2} & x_{k-2,3} & \cdots & x_{k-2,k-1} & x_{k-2,0} \\
\end{pmatrix}.\]
This means $x_{a,b} = -x_{\left(a-1\right)\% k,\left(b+1\right)\% k}$ for all $0\leq a,b\leq k-1$. Applying this fact $k$ times, we see that \[x_{a,b} = \left(-1\right)^k x_{\left(a-k\right)\% k,\left(b+k\right)\% k} = \left(-1\right)^k x_{a,b} = - x_{a,b}.\]
Therefore, we must have $x_{a,b} = 0$ for all $a,b$. Thus, $X = 0$. As noted above, this implies $\dim\calH_{p,q}^G = \dim\calH_{p,q}^{G'}$ for all $p,q$, completing the proof that (2) implies (3) in Theorem \ref{thm:isospec}.

\subsection{(3) implies (4)} 

In this section, we extend techniques from \cite{ikeda1979} to the CR setting. For a lens space $L(k;\ell_1,\ldots,\ell_n)$ corresponding to group $G$, consider the generating function below taking inputs as pairs of complex numbers with sufficiently small modulus,
\[F_{\left(k; \ell_1,\ldots,\ell_n\right)}\left(z,w\right) = \sum_{p,q\geq 0} \left(\dim\calH_{p,q}^G\right)z^p w^q.\]
We now present a closed form for this generating function that connects the CR spectrum of this lens space and its geometry.
\begin{theorem} 
\label{thm:genfunc}
Let $G$ be the group corresponding to the lens space $L\left(k;\ell_1,\ldots,\ell_n\right)$. It follows that,
\[F_{\left(k; \ell_1,\ldots,\ell_n\right)}\left(z,w\right)  = \frac1k\sum_{m = 0}^{k-1} \frac{1 - z w} {\prod_{i = 1}^n\left(z-\zeta^{-m\ell_i}\right)\left(w-\zeta^{m\ell_i} \right)}\]
where $\zeta = e^{2\pi i/k}$.
\end{theorem}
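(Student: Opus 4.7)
The plan is to establish the identity via the standard Burnside/Molien averaging formula, reducing the problem to computing the character of the $G$-action on each $\calH_{p,q}$ and then rewriting the resulting generating function. Since each $\calH_{p,q}$ is a finite-dimensional complex representation of the cyclic group $G = \langle g \rangle$ via precomposition, we have
\[\dim \calH_{p,q}^G \;=\; \frac{1}{k}\sum_{m=0}^{k-1} \operatorname{tr}(g^m|_{\calH_{p,q}}).\]
Summing in $p,q$ (everything converges absolutely for $|z|,|w|$ small) and interchanging the sums, it suffices to show that, for each $0 \leq m \leq k-1$,
\[\chi_m(z,w) \;:=\; \sum_{p,q \geq 0} \operatorname{tr}(g^m|_{\calH_{p,q}})\, z^p w^q \;=\; \frac{1-zw}{\prod_{i=1}^n (1-\zeta^{-m\ell_i}z)(1-\zeta^{m\ell_i}w)}.\]
Once this is in hand, factoring $-\zeta^{-m\ell_i}$ out of each $(1-\zeta^{-m\ell_i}z)$ and $-\zeta^{m\ell_i}$ out of each $(1-\zeta^{m\ell_i}w)$ converts the denominator into $\prod_i (z-\zeta^{m\ell_i})(w-\zeta^{-m\ell_i})$ (the accumulated phases $\zeta^{\pm m \sum \ell_i}$ and the signs $(-1)^n$ cancel in pairs), and relabeling $m \mapsto k-m$ yields the identity as stated.

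To compute $\chi_m$, first pass to the easier trace on the full polynomial spaces. Let $\calP_{p,q}$ denote the space of polynomials on $\mathbb{C}^n$ of bidegree $(p,q)$. Since $g$ acts diagonally on the coordinates by $z_i \mapsto \zeta^{\ell_i} z_i$, direct substitution shows that $g^m$ sends the monomial $\bar{z}^\alpha z^\beta$ to $\zeta^{m \sum_i \ell_i (\beta_i - \alpha_i)}\, \bar{z}^\alpha z^\beta$. The trace generating function for $\calP_{p,q}$ therefore factors coordinate by coordinate into geometric series:
\[\sum_{p,q \geq 0} \operatorname{tr}(g^m|_{\calP_{p,q}})\, z^p w^q \;=\; \sum_{\alpha,\beta \in \mathbb{N}^n} \zeta^{m \sum \ell_i(\beta_i - \alpha_i)}\, z^{|\alpha|} w^{|\beta|} \;=\; \prod_{i=1}^n \frac{1}{(1-\zeta^{-m\ell_i}z)(1-\zeta^{m\ell_i}w)}.\]

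The final step is to cut down from $\calP_{p,q}$ to $\calH_{p,q}$ via the standard $U(n)$-equivariant decomposition
\[\calP_{p,q} \;=\; \calH_{p,q} \,\oplus\, |z|^2\, \calP_{p-1,q-1}\]
(with the convention $\calP_{-1,q-1} = \calP_{p-1,-1} = 0$), arising from the surjectivity of the Euclidean Laplacian from $\calP_{p,q}$ onto $\calP_{p-1,q-1}$. Because $g \in U(n)$ preserves $|z|^2$, this decomposition is $G$-equivariant, and multiplication by $|z|^2$ is a $G$-isomorphism from $\calP_{p-1,q-1}$ onto the second summand. Taking traces yields $\operatorname{tr}(g^m|_{\calH_{p,q}}) = \operatorname{tr}(g^m|_{\calP_{p,q}}) - \operatorname{tr}(g^m|_{\calP_{p-1,q-1}})$, which at the generating-function level is the same as multiplying by $(1-zw)$. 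Combining this with the product computation above produces the desired identity for $\chi_m$. The only nonroutine ingredient is the $G$-equivariance of the harmonic decomposition, which is automatic from the inclusion $G \subset U(n)$; everything else is algebraic bookkeeping.
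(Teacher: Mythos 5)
Your proof is correct and follows essentially the same route as the paper: average the character $\chi_{p,q}$ over $G$, compute the character of $g^m$ on the full polynomial space $\calP_{p,q}$ as a product of geometric series, and pass to $\calH_{p,q}$ via the $G$-equivariant decomposition $\calP_{p,q}\cong\calH_{p,q}\oplus|z|^2\calP_{p-1,q-1}$, which inserts the factor $1-zw$. One small remark: you write the $\calP_{p,q}$-basis as $\bar z^\alpha z^\beta$ with $|\alpha|=p$, $|\beta|=q$, which reverses the paper's convention ($p$ is the holomorphic degree), so your per-$m$ claim $\chi_m(z,w)=\frac{1-zw}{\prod_i(1-\zeta^{-m\ell_i}z)(1-\zeta^{m\ell_i}w)}$ has $z$ and $w$ roles swapped relative to the paper's $\frac{1-zw}{\prod_i(1-\zeta^{m\ell_i}z)(1-\zeta^{-m\ell_i}w)}$; this is harmless because you correctly perform the $m\mapsto k-m$ reindexing at the end (and, independently, $\dim\calH^G_{p,q}=\dim\calH^G_{q,p}$ makes the total generating function symmetric in $z,w$). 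Your explicit justification of $G$-equivariance of the harmonic decomposition via $G\subset U(n)$ preserving $|z|^2$ is a nice touch the paper leaves implicit (it simply cites Klima).
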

\begin{proof}
Let $\calP_{p,q}$ denote the space of polynomials of bidegree $p,q$ on $S^{2n-1}$. We may consider $\calH_{p,q}$ and $\calP_{p,q}$ as $G$-modules over $\mathbb{C}$. Note that $\calP_{p,q} \cong \calH_{p,q}\oplus |z|^2\calP_{p-1,q-1}$ \cite{klima2004}.

Let $\chi_{p,q}$ be the character of $\calH_{p,q}$ and $\widetilde{\chi}_{p,q}$ be the character of $\calP_{p,q}$. Note that \[\left\{z^{\alpha}\overline{z}^\beta : \alpha,\beta\in \Z_{\geq 0}^n, \left|\alpha\right| = p, \left|\beta\right| = q\right\}\]
is a basis for $\calP_{p,q}$, and recall that
\[g^m \cdot z^{\alpha} \overline{z}^\beta = \zeta^{m \left(\sum_{i=1}^n{\ell_i \left(\alpha_i - \beta_i\right)}\right)} z^{\alpha} \overline{z}^\beta.\]
Since this is a basis on which $g^m$ acts diagonally, we see that \[\widetilde{\chi}_{p,q}\left(g^m\right) = \sum_{\left|\alpha\right| = p,\left|\beta\right| = q}  \zeta^{m \left(\sum_{i=1}^n{\ell_i \left(\alpha_i - \beta_i\right)}\right)}.\]
Notice that in 
\[\prod_{i = 1}^n \left(1 + \zeta^{m \ell_i}z + \zeta^{2m \ell_i}z^2 + \cdots\right) \left(1 + \zeta^{-m \ell_i}w + \zeta^{-2m \ell_i}w^2 + \cdots\right),\]
the coefficient of $z^p w^q$ is precisely $\widetilde{\chi}_{p,q}\left(g^m\right)$. It follows that,
\[\sum_{p,q\geq 0}\widetilde{\chi}_{p,q}\left(g^m\right) z^p w^q = 
 \frac{1}{\prod_{i = 1}^n\left(1 - \zeta^{m \ell_i}z\right) \left(1 - \zeta^{-m \ell_i}w\right)},\]
where we have used the fact that $1 + x + x^2 + \cdots = \left(1-x\right)^{-1}$.

We now relate the characters to the dimension of eigenspaces. Since $\calP_{p,q} \cong \calH_{p,q} \oplus |z|^2\calP_{p-1,q-1}$, it follows that \[\chi_{p,q} =  \widetilde\chi_{p,q}- \widetilde{\chi}_{p-1,q-1}.\]
Note that when $p$ or $q$ is zero $\calP_{p,q}=\calH_{p,q}$; therefore, we set $\widetilde{\chi}_{0,-1}=\widetilde{\chi}_{-1,0}=\widetilde{\chi}_{-1,-1}=0$.
Moreover, $\dim\calH_{p,q}^G$ can be obtained by averaging the character $\chi_{p,q}$  over the group $G$ as follows, 
\[\dim \mathcal{H}^G_{p,q} = \frac1k \sum_{m = 0}^{k-1} \chi_{p,q}\left(g^m\right).\]
Combining all of these facts, we see that 
\begin{align*}
\sum_{p,q\geq 0} \left(\dim\mathcal{H}_{p,q} ^G\right) z^p w^q 
&= \frac1k\sum_{p,q\geq 0} \lrp{\sum_{m = 0}^{k-1} \chi_{p,q}\left(g^m\right)} z^p w^q\\
&= \frac1k\sum_{p,q\geq 0} \lrp{\sum_{m = 0}^{k-1} \widetilde\chi_{p,q}\left(g^m\right) - \widetilde\chi_{p-1,q-1}\left(g^m\right)} z^p w^q\\
&= \frac1k\sum_{m = 0}^{k-1}\lrp{\lrp{ \sum_{p,q\geq 0} \widetilde\chi_{p,q}\left(g^m\right)z^p w^q} - z w \lrp{\sum_{p,q\geq 1}\widetilde\chi_{p-1,q-1}\left(g^m\right) z^{p-1} w^{q-1}}}\\
&= \frac1k\sum_{m = 0}^{k-1} \frac{1-zw} {\prod_{i = 1}^n\left(1-\zeta^{m\ell_i} z\right)
\left(1-\zeta^{-m\ell_i} w\right)}= \frac1k\sum_{m = 0}^{k-1} \frac{1-zw} {\prod_{i = 1}^n\left(z-\zeta^{-m\ell_i}\right)
\left(w-\zeta^{m\ell_i}\right)}.
\end{align*}
\end{proof}
To complete the proof that $(3)$ implies $(4)$ in Theorem \ref{thm:isospec} in the $n=2$ case, we need the following.

\begin{lemma} 
\label{lem:independence} 
Let $\zeta = e^{2\pi i/k}$. The set 
\[\left\{\frac{1}{\left(z-\zeta^i \right)\left(w-\zeta^{-i}\right)\left(z-\zeta^j \right)\left(w-\zeta^{-j}\right)}\ \middle |\ 0\leq i\leq j\leq k-1\right\}\]
is linearly independent over $\C$.
\end{lemma}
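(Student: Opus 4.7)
The plan is to assume a linear relation $\sum_{0 \le i \le j \le k-1} c_{i,j}\, f_{i,j}(z,w) = 0$, where
\[f_{i,j}(z,w) = \frac{1}{(z-\zeta^i)(w-\zeta^{-i})(z-\zeta^j)(w-\zeta^{-j})},\]
and exploit the pole structure in $z$ and $w$ separately to isolate the coefficients. The key observation is that the diagonal basis elements $f_{a,a}$ have \emph{double} poles along $z=\zeta^a$ and $w=\zeta^{-a}$, while the off-diagonal $f_{i,j}$ (with $i<j$) have only simple poles; this forces a two-stage argument in which I first peel off the $c_{a,a}$ and then dispose of the $c_{i,j}$ with $i<j$.

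For the diagonal step, fix $a\in\{0,\dots,k-1\}$, multiply the relation by $(z-\zeta^a)^2$, and send $z\to\zeta^a$. The term from $f_{a,a}$ survives as $(w-\zeta^{-a})^{-2}$; every off-diagonal $f_{i,j}$ with $a\in\{i,j\}$ picks up a single factor $(z-\zeta^a)$ that vanishes in the limit; and terms with $a\notin\{i,j\}$ vanish trivially. Hence $c_{a,a}(w-\zeta^{-a})^{-2}\equiv 0$ as a function of $w$, forcing $c_{a,a}=0$ for every $a$.

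For the off-diagonal step, with the diagonal coefficients now known to vanish, I fix $a$, multiply by $(z-\zeta^a)$, and send $z\to\zeta^a$. The surviving terms are exactly those with $a\in\{i,j\}$, and after factoring out the common $(w-\zeta^{-a})^{-1}$ one obtains
\[\sum_{j>a}\frac{c_{a,j}}{(\zeta^a-\zeta^j)(w-\zeta^{-j})} \;+\; \sum_{i<a}\frac{c_{i,a}}{(\zeta^a-\zeta^i)(w-\zeta^{-i})} \;=\; 0.\]
For each $b\ne a$ in $\{0,\dots,k-1\}$, multiplying this identity by $(w-\zeta^{-b})$ and letting $w\to\zeta^{-b}$ isolates a single term $c_{\min(a,b),\max(a,b)}/(\zeta^a-\zeta^b)$; since $\zeta^a\ne\zeta^b$, this forces the coefficient to vanish. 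Ranging over all unordered pairs $\{a,b\}$ exhausts the off-diagonal indices.

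The main conceptual obstacle is the mismatch in pole orders between diagonal and off-diagonal basis elements: attempting to clear all denominators at once, e.g.\ by multiplying by $(z^k-1)^2(w^k-1)^2$, introduces spurious zeros that obscure the surviving terms on evaluation. Once one commits to the staged residue extraction above, the remaining work reduces to the elementary fact that $\zeta^a\ne\zeta^b$ for distinct $a,b\in\{0,\dots,k-1\}$.
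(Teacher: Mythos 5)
Your proof is correct and shares the same underlying idea as the paper's --- exploit the pole structure at roots of unity to isolate coefficients --- but the execution is meaningfully different, and in fact more complete. The paper splits the set into diagonal elements $f_{l,l}$ and off-diagonal elements $f_{l,m}$ ($l<m$), \emph{asserts} without argument that the two spans meet only in $\{0\}$ and that the diagonal elements are independent, and then handles only the off-diagonal relation by multiplying through by $(z^k-1)(w^k-1)$ and evaluating at $(z,w)=(\zeta^{l_0},\zeta^{-m_0})$. Your version replaces that unproved assertion with an actual argument: multiplying by $(z-\zeta^a)^2$ and letting $z\to\zeta^a$ kills every term except the double pole coming from $f_{a,a}$, so the diagonal coefficients vanish first, and the mismatch in pole order between diagonal and off-diagonal elements is exactly what makes this work. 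Your second stage (first-order pole in $z$, then in $w$) recovers the same conclusion for the off-diagonal coefficients that the paper gets by clearing all simple poles at once; the two are interchangeable. Your closing remark --- that multiplying everything by $(z^k-1)^2(w^k-1)^2$ to force a common denominator would introduce spurious zeros --- correctly diagnoses why a single-shot substitution does not cleanly separate the diagonal from the off-diagonal coefficients, and explains why the staged residue extraction (or, equivalently, the paper's unproved span-disjointness claim) is needed. In short: same strategy, but your proof supplies the step the paper leaves implicit.
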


\begin{proof}
We will label the elements of the above set as
\[f_{l,m} = \frac1{\left(z-\zeta^l\right)\left(w-\zeta^{-l}\right)\left(z-\zeta^m\right)\left(z-\zeta^{-m}\right)}.\]
First, note that $\operatorname{span}\left\{f_{l,l}: 0 \leq l < k-1\right\} \cap \operatorname{span}\left\{f_{l,m}: 0 \leq l < m \leq k-1\right\} = \left\{0\right\}$
and the set $\left\{f_{l,l}: 0 \leq l < k-1\right\}$
is linearly independent.
Now for $\left\{f_{l,m}: 0 \leq l < m \leq k-1\right\}$,
suppose there exists $a_{l,m}\in \mathbb{C}$ so that
\[\sum_{l=0}^{k-1}\sum_{m=l+1}^{k-1}a_{l,m}f_{l,m}=\sum_{l=0}^{k-1}\sum_{m=l+1}^{k-1}\frac{a_{l,m}}{\left(z-\zeta^l\right)\left(w-\zeta^{-l}\right)\left(z-\zeta^m\right)\left(w-\zeta^{-m}\right)}=0.\]
If we multiply by a factor of $\left(z^k-1\right)\left(w^k-1\right)$,
then we are left with a polynomial
\[\left(z^k-1\right)\left(w^k-1\right)\sum_{l=0}^{k-1}\sum_{m=l+1}^{k-1}\frac{a_{l,m}}{\left(z-\zeta^l\right)\left(w-\zeta^{-l}\right)\left(z-\zeta^m\right)\left(w-\zeta^{-m}\right)}=0.\]
Setting $z=\zeta^{l_0}$, $w=\zeta^{-m_0}$, where $0 \leq l_0 < m_0 \leq k-1$ implies $a_{l_0,m_0}=0$. Therefore, the set $\left\{f_{l,m}: 0 \leq l < m \leq k-1\right\}$ is linearly independent.
\end{proof}

\begin{corollary}
Let $L\left(k; \ell_1,\ell_2\right)$ and $L\left(k; \ell_1',\ell_2'\right)$ be lens spaces with groups $G$ and $G'$ respectively. If $\dim\calH^G_{p,q} = \dim\calH^{G'}_{p,q}$ for all $p,q$, then there exists a permutation $\sigma$ and an integer $a$ such that $\left(\ell_1,\ell_2\right) = \left(a\ell'_{\sigma\left(1\right)},a\ell'_{\sigma\left(2\right)}\right)$.
\end{corollary}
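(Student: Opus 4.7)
The plan is to apply Theorem \ref{thm:genfunc} to convert the hypothesis on dimensions into an equality of rational functions in two variables, and then use Lemma \ref{lem:independence} to extract a combinatorial identity that yields the desired permutation and scaling.

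First, since $\dim\calH^G_{p,q} = \dim\calH^{G'}_{p,q}$ for all $p,q \geq 0$, the generating functions coincide: $F_{(k;\ell_1,\ell_2)} = F_{(k;\ell_1',\ell_2')}$. Applying Theorem \ref{thm:genfunc} and canceling the common factor $(1-zw)/k$, this yields
\[\sum_{s=0}^{k-1} \frac{1}{(z-\zeta^{-s\ell_1})(w-\zeta^{s\ell_1})(z-\zeta^{-s\ell_2})(w-\zeta^{s\ell_2})} = \sum_{s=0}^{k-1} \frac{1}{(z-\zeta^{-s\ell_1'})(w-\zeta^{s\ell_1'})(z-\zeta^{-s\ell_2'})(w-\zeta^{s\ell_2'})}.\]
For each $s$, let $(i_s,j_s)$ be the residues of $(-s\ell_1, -s\ell_2)$ modulo $k$ sorted so that $i_s \leq j_s$, and define $(i_s', j_s')$ analogously on the right. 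In the notation of Lemma \ref{lem:independence}, the $s$-th summand on the left is exactly $f_{i_s, j_s}$ and the $s$-th summand on the right is exactly $f_{i_s', j_s'}$. Rearranging so that zero equals a linear combination of the $f_{l,n}$'s with integer coefficients (equal to signed multiplicities), Lemma \ref{lem:independence} forces the two multisets $\{(i_s,j_s) : 0\leq s \leq k-1\}$ and $\{(i_s',j_s') : 0\leq s \leq k-1\}$ to coincide.

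Finally, the pair $\{-\ell_1, -\ell_2\} \bmod k$ arising from $s=1$ on the left must appear on the right as some $\{-a\ell_1', -a\ell_2'\} \bmod k$. The case $a \equiv 0 \bmod k$ is excluded, since it would force $\ell_1 \equiv \ell_2 \equiv 0 \bmod k$, contradicting $\gcd(\ell_i,k)=1$. Negating, we obtain a permutation $\sigma$ of $\{1,2\}$ and an integer $a$ with $\ell_i \equiv a\ell_{\sigma(i)}' \bmod k$ for $i = 1,2$, completing the proof. The main technical subtlety is step two, where one must allow for the possibility that different values of $s$ produce the same sorted pair and for diagonal pairs with $i_s = j_s$; both phenomena are absorbed into the multiset description and are already accommodated by the basis appearing in Lemma \ref{lem:independence}.
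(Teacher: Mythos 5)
Your proof is correct and takes essentially the same route as the paper: express the equality of generating functions via Theorem \ref{thm:genfunc}, invoke Lemma \ref{lem:independence} to match summands, and read off the permutation and scaling from the $s=1$ term. The one place you are actually more careful than the paper is in phrasing the matching as an equality of multisets of sorted index pairs (accounting for possible repetitions, e.g.\ when $\ell_1\equiv-\ell_2\bmod k$, and for diagonal pairs), whereas the paper simply says one may ``equate each term on the left with a term on the right''; the conclusion is the same.
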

\begin{proof}
Since $\dim\calH^G_{p,q} = \dim\calH^{G'}_{p,q}$ for all $p,q$, $F_{\left(k; \ell_1,\ell_2\right)}\left(x\right) = F_{\left(k; \ell_1',\ell_2'\right)}\left(x\right)$. Applying Theorem~\ref{thm:genfunc} and cancelling like terms, we obtain 
\[\sum_{m = 0}^{k-1} \frac{1} 
{\left(z-\zeta^{-m\ell_1}\right)\left(w-\zeta^{m\ell_1}\right)\left(z-\zeta^{-m\ell_2}\right)\left(w-\zeta^{m\ell_2}\right)} =
\sum_{m = 0}^{k-1} \frac{1} 
{\left(z-\zeta^{-m\ell_1'}\right)\left(w-\zeta^{m\ell_1'}\right)
\left(z-\zeta^{-m\ell_2'}\right)\left(w-\zeta^{m\ell_2'}\right)}.\]
By Lemma \ref{lem:independence}, the terms of these sums are linearly independent, so we may equate each term on the left with a term on the right. By considering the $m = 1$ term on the left, there exists some $m_0$ such that 
\[\frac{1}{\left(z-\zeta^{-\ell_1}\right)\left(w-\zeta^{\ell_1}\right)
\left(z-\zeta^{-\ell_2}\right)\left(w-\zeta^{\ell_2}\right)}  
= \frac{1}{\left(z-\zeta^{-m_0\ell_1'}\right)\left(w-\zeta^{m_0\ell_1'}\right)
\left(z-\zeta^{-m_0\ell_2'}\right)\left(w-\zeta^{m_0\ell_2'}\right)}.\]
This implies $\left(m_0\ell_1',m_0\ell_2'\right)$ is some permutation of $\left(\ell_1, \ell_2\right)$. The claim follows after setting $a = m_0$.
\end{proof}
\subsection{(4) implies (1) and (1) implies (2)}

We begin with the following theorem which shows that (4) implies (1).

\begin{theorem}
Let $L\left(k;\ell_1,\ldots,\ell_n\right)$ and $L\left(k;\ell_1',\ldots,\ell_n'\right)$ be lens spaces. If there exists a permutation $\sigma$ and an integer $a$  such that $\left(\ell_1',\ldots,\ell_n'\right) \equiv \left(a\ell_{\sigma\left(1\right)},\ldots,a\ell_{\sigma\left(n\right)}\right) \mod k$, then $L\left(k;\ell_1,\ldots,\ell_n\right)$ and $L\left(k;\ell_1',\ldots,\ell_n'\right)$ are CR isometric.
\end{theorem}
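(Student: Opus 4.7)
My plan is to construct an explicit CR isometry between the two lens spaces by factoring the relation $(\ell_1', \ldots, \ell_n') \equiv (a\ell_{\sigma(1)}, \ldots, a\ell_{\sigma(n)}) \pmod k$ into two simpler moves: (i) rescaling all indices by a fixed integer $a$, and (ii) permuting the indices by $\sigma$. The idea is to show that (i) leaves the CR manifold literally unchanged as a quotient, while (ii) is realized by a unitary coordinate permutation of $\C^n$ that descends to the quotient.

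For the rescaling, I first observe that since each $\ell_i'$ is required to be coprime to $k$ and $\ell_i' \equiv a \ell_{\sigma(i)} \pmod k$ with $\gcd(\ell_{\sigma(i)}, k) = 1$, one is forced to have $\gcd(a, k) = 1$. Writing $g_\sigma = \operatorname{diag}(\zeta^{\ell_{\sigma(1)}}, \ldots, \zeta^{\ell_{\sigma(n)}})$ for the canonical generator associated to $L(k; \ell_{\sigma(1)}, \ldots, \ell_{\sigma(n)})$, the generator $g'$ of $G'$ satisfies $g' = g_\sigma^a$. Because $a$ is a unit modulo $k$, the cyclic groups $\langle g_\sigma \rangle$ and $\langle g_\sigma^a \rangle = G'$ are identical as subgroups of $U(n)$, so $L(k; \ell_1', \ldots, \ell_n') = L(k; \ell_{\sigma(1)}, \ldots, \ell_{\sigma(n)})$ as quotient CR manifolds.

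For the permutation, I define $\Phi : S^{2n-1} \to S^{2n-1}$ by $\Phi(z_1, \ldots, z_n) = (z_{\sigma(1)}, \ldots, z_{\sigma(n)})$. This is the restriction of a unitary transformation of $\C^n$, so it preserves the standard Hermitian inner product and the complex structure, and hence restricts to a CR isometry of the sphere with its induced CR structure and Riemannian metric. A direct computation shows the intertwining relation $\Phi \circ g_\ell = g_\sigma \circ \Phi$, where $g_\ell = \operatorname{diag}(\zeta^{\ell_1}, \ldots, \zeta^{\ell_n})$ generates $G$: the $i$-th coordinate of either side evaluated at $z$ is $\zeta^{\ell_{\sigma(i)}} z_{\sigma(i)}$. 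Consequently $\Phi$ carries $G$-orbits bijectively onto $\langle g_\sigma \rangle$-orbits and descends to a well-defined CR isometry $\widetilde{\Phi} : L(k; \ell_1, \ldots, \ell_n) \to L(k; \ell_{\sigma(1)}, \ldots, \ell_{\sigma(n)})$, since both quotient CR and metric structures are defined by pushing forward the structures on $S^{2n-1}$ through the respective covering maps.

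Composing $\widetilde{\Phi}$ with the identification from the first step yields the desired CR isometry $L(k; \ell_1, \ldots, \ell_n) \to L(k; \ell_1', \ldots, \ell_n')$. The argument is essentially a verification that a natural candidate works: the only point requiring genuine attention is noticing that the hypothesis forces $\gcd(a, k) = 1$, so that rescaling exponents by $a$ does not alter the underlying cyclic subgroup of $U(n)$. I do not anticipate any serious obstacle.
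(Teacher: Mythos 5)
Your proof is correct and takes essentially the same approach as the paper: decomposing the hypothesis into a permutation step realized by a unitary coordinate permutation and a rescaling step that leaves the subgroup of $U(n)$ unchanged because $\gcd(a,k)=1$. You include a bit more verification (the explicit intertwining relation $\Phi \circ g_\ell = g_\sigma \circ \Phi$), but the substance is identical.
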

\begin{proof}
For any permutation $\sigma$, \[\left(z_1,z_2,\dots,z_n\right) \mapsto (z_{\sigma\left(1\right)},z_{\sigma\left(2\right)}, \dots, z_{\sigma\left(n\right)})\]
is a CR isometry from $S^{2n-1}$ to itself. This induces a CR isometry from $L\left(k; \ell_1, \dots, \ell_n\right)$ to $L\left(k; \ell_{\sigma(1)}, \dots, \ell_{\sigma\left(n\right)}\right)$. 

Note that $a$ must be relatively prime to $k$ by the definition of a lens space. If $g$ denotes the action corresponding to $L\left(k; \ell_1, \dots, \ell_n\right)$, then $g^a$ generates the same subgroup as $g$. Hence $L\left(k; \ell_1, \dots, \ell_n\right) = L\left(k; a\ell_1, \dots, a\ell_n\right)$, which completes the proof.
\end{proof}

Finally, we need to prove (1) implies (2) to conclude the proof of Theorem \ref{thm:isospec}. This implication follows from a more general statement that the Kohn Laplacian commutes with CR isometries and therefore isometric CR manifolds are isospectral. We refer to \cite[Section 4.4]{Canzani} for the proof in the Riemannian setting and argue similarly in the CR setting.

\newcommand{\etalchar}[1]{$^{#1}$}


\end{document}